\newtheorem{theorem}{Theorem}[section]
\newtheorem{lemma}[theorem]{Lemma}
\theoremstyle{definition}
\newtheorem{definition}[theorem]{Definition}
\newtheorem{remark}[theorem]{Remark}
\newenvironment{manualtheorem}[1]{%
  \manualtheoreminner
}{\endmanualtheoreminner}
\theoremstyle{remark}
\numberwithin{equation}{section}
\begin{document}
% \title[short text for running head]{full title}
\title[Criteria for nilpotency of fusion systems]{Criteria for nilpotency of fusion systems}

%    Only \author and \address are required; other information is
%    optional.  Remove any unused author tags.
	
%    author one information
% \author[short version for running head]{name for top of paper}
\author{Jie Jian}
\address{School of Mathematics and Statistics,
		Hubei University,
		Wuhan, $430062$, P. R. China}
\curraddr{}
\email{jjian22@163.com}
\thanks{}
	
\author{Jun Liao}
\address{School of Mathematics and Statistics,
		Hubei University,
		Wuhan, $430062$, P. R. China}
\curraddr{}
\email{jliao@hubu.edu.cn}
\thanks{}
	
\author{Heguo Liu}
\address{Department of Mathematics, Hainan University, Haikou, $570228$, China
	}
\curraddr{}
\email{ghliu@hainanu.edu.cn}
\thanks{}

%    \subjclass is required.
\subjclass[2010]{20F19, 20J15}
\keywords{fusion system, nilpotency criterion, Glauberman-Thompson}
\date{}
	
\dedicatory{}
	
%    Abstract is required.
\begin{abstract}
Let $p$ be an odd prime and let $\mathcal{F}$ be a fusion system over a finite $p$-group $P$. A fusion system $\mathcal{F}$ is said to be nilpotent if $\mathcal{F}=\mathcal{F}_{P}(P)$.
In this paper we provide new criteria for saturated fusion systems $\mathcal{F}$ to be nilpotent, which can be viewed as extension of the $p$-nilpotency theorem of Glauberman and Thompson for fusion systems attributed to Kessar and Linckelmann. 
%[5, Ch. 8, Theorem 3.1]
%\cite[Theorem A]{KL}
\end{abstract}
	
\maketitle

%    Text of article.
\section{Introduction}\label{section1}
Let $p$ be a prime. Let $P$ be a finite $p$-group. A fusion system $\mathcal{F}$ over $P$ is a category whose objects are the subgroups of $P$, with the set $\operatorname{Hom}_{\mathcal{F} }(Q,R)$ of morphisms consisting of monomorphisms from $Q$ into $R$, and such that some weak axioms are satisfied \cite[Definition I.2.1]{AKO}. A fusion system $\mathcal{F}$ is saturated if it satisfies two more axioms \cite[Definition I.2.2]{AKO} which hold in $\mathcal{F}_{S} (G)$ as a consequence of Sylow's theorem. 
Many classic results on fusion in a Sylow subgroup $P$ of a finite group $G$ can be interpreted as results about the fusion system $\mathcal{F}_{P} (G)$.
A theorem of Frobenius \cite[Theorem 8.6]{Glauberman71} says that $G$ is $p$-nilpotent if and only if $P$ controls $G$-fusion in $P$, that is to say, if and only if $\mathcal{F}_{P} (G) = \mathcal{F}_{P} (P )$.

Let  $J(P)$ be the Thompson subgroup of $P$ generated by all abelian subgroups of $P$ of maximal order. 
A finite group $G$ is called $p$-stable \cite[Page 22, Definition]{Glauberman71} if $gC_{G}(Q)$ lies in $O_{p}(N_{G}(Q)/C_{G}(Q))$
whenever $Q$ is a $p$-subgroup of $G$, $g\in N_{G}(Q)$ and $[Q,g,g] = 1$.
The well known Glauberman's ZJ theorem \cite[Theorem A]{Glauberman68} states that if $p$ is an odd prime, $P$ is a Sylow $p$-subgroup of a $p$-stable group $G$, and $C_{G}(O_{p}(G))\leq O_{p}(G)$, then $Z(J(P))$ is a characteristic subgroup of $G$.
Glauberman's ZJ theorem plays a major role in the classification of simple groups having abelian or dihedral Sylow 2-subgroups. There are several versions of the theorem, depending on how one defines the Thompson subgroup. 
M. K\i zmaz proves that in \cite[Theorem B]{Kizmaz} if $p$ is an odd prime, $P$ is a Sylow $p$-subgroup of a $p$-stable group $G$, $C_{G}(O_{p}(G))\leq O_{p}(G)$ and $D$ is a strongly closed subgroup in $P$, then $Z(J(D))$ is a normal subgroup of $G$. Recently, D. Allcock provides a new proof of Glauberman's ZJ theorem, in a form that clarifies the choices involved and offers more choices than classical treatments, see \cite[Theorem 1.1 and Corollary 1.5]{Allcock}.

In particular, the ZJ-type theorems say that ZJ-type subgroups control fusion in groups.
Consequently, we may obtain $p$-nilpotency criteria for groups.
The $p$-nilpotency theorem of Glauberman and Thompson \cite[Theorem 8.3.1]{Gorenstein} states that $G$ is $p$-nilpotent if and only if $N_{G}(Z(J(P)))$ is $p$-nilpotent, where $p$ is an odd prime. The $p$-nilpotency theorem has been generalised to $p$-blocks of finite groups \cite[Theorem]{KL03} and to arbitrary fusion systems \cite[Theorem A]{KL} by Kessar and Linckelmann. 
In this paper we will  give some new nilpotency criteria for fusion systems which is an extension of Glauberman-Thompson $p$-nilpotency theorem as well as of Kessar-Linckelmann nilpotency theorem for fusion systems \cite[Theorem A]{KL}.

Let $\mathfrak{A}b(P)$ be the set of all abelian subgroups of $P$ and $\mathcal{A}\subseteq \mathfrak{Ab}(P)$.  We set 
\[J_{\mathcal{A}}=\langle A\mid A\in \mathcal{A}\rangle,\quad I_{\mathcal{A}}=\bigcap_{A\in \mathcal{A}} A \mbox{    where $I_{\mathcal{A}}=1$ if $\mathcal{A}=\emptyset$}.\] 
$J_{\mathcal{A}}$ is a sort of generalised Thompson subgroup, and $I_{\mathcal{A}}$ lies in its center.
For every subgroup $Q$ of $P$, we define $\mathcal{A}|Q$ as $\{A\in \mathcal{A}\mid A\leq Q\}$.

Our aim is to give two new criteria for saturated fusion systems $\mathcal{F}$ to be nilpotent, which generalised the $p$-nilpotency theorem of Glauberman and Thompson for fusion systems of Kessar and Linckelmann. 

\begin{manualtheorem}{A}\label{main}
Let $\mathcal{F}$ be a saturated fusion system over a finite $p$-group $P$, where $p$ is an odd prime. Then $\mathcal{F}=\mathcal{F}_{P}(P)$ if and only if $N_{\mathcal{F}}(I_{\mathcal{A}})=\mathcal{F}_{P}(P)$, where $\mathcal{A}\subseteq \mathfrak{Ab}(P)$  satisfies the following properties:
\begin{itemize}
\item[(i)] For every $Q\unlhd P$, $I_{\mathcal{A}|Q}$ is $\operatorname{Aut}_{\mathcal{F}}(Q)$-invariant.
\item[(ii)] For every $B\unlhd P$ with nilpotent class at most two, if there exist members of $\mathcal{A}$ that contain $[B,B]$ but not $B$, then $B$ normalises one of them.
\end{itemize}
\end{manualtheorem}

\begin{manualtheorem}{B}\label{main2}
Let $\mathcal{F}$ be a saturated fusion system over a finite $p$-group $P$, where $p$ is an odd prime. Then $\mathcal{F}=\mathcal{F}_{P}(P)$ if and only if $N_{\mathcal{F}}(I_{\mathcal{A}|D})=\mathcal{F}_{P}(P)$ for some strongly closed subgroup $D$ in $\mathcal{F}$, $\mathcal{A}\subseteq \mathfrak{Ab}(P)$  satisfy the following properties:
\begin{itemize}
\item[(i)] For every $Q\unlhd P$, $I_{\mathcal{A}|Q}$ is $\operatorname{Aut}_{\mathcal{F}}(Q)$-invariant.
\item[(ii)] For every $B\unlhd P$ with nilpotent class at most two, if there exists $A\in \mathcal{A}$ such that $[B,B]\leq A$ and $B$ does not normalise $A$, then $A^{*}=(A\cap A^{b})[A,b]\in \mathcal{A}$ for every $b\in N_{B}(N_{P}(A))-N_{B}(A)$. %Then $A^{*}\in \mathcal{A}$. such that $A^{*}\cap B$ is maximal.
\end{itemize}
\end{manualtheorem}

\section{Background on fusion systems and finite groups}\label{section2}

In this section we collect some basic concepts and results that will be needed later. We refer to  \cite{AKO} for notations of fusion systems. 

Let $p$ be a prime and let $P$ be a finite $p$-group.
We first list some important classes of subgroups in a given fusion system.

\begin{definition}[{\cite[Definition I.2.2, 3.1, 4.1]{AKO}}]
Let $\mathcal{F}$ be a fusion system over $P$. Fix a subgroup $Q$ of $P$.
	
$\bullet$ $Q, R \leq P$ are $\mathcal{F}$-conjugate if they are isomorphic as objects of the category $\mathcal{F}$. Let $Q^{\mathcal{F}}$ denote the set of all subgroups of $P$ which are $\mathcal{F}$-conjugate to $Q$.
		
$\bullet$  $Q \leq P$ is $\mathcal{F}$-centric if $C_{P}(R)=Z(R)$ for all $R\in Q^{\mathcal{F}}$.
		
$\bullet$ $Q$ is normal in $\mathcal{F}$ (denoted $Q\lhd \mathcal{F}$) if for all $R, S\leq P$ and all $\varphi \in \operatorname{Hom}_{\mathcal{F}}(R, S)$, $\varphi$ extends to a morphism $\tilde{\varphi}\in \operatorname{Hom}_{\mathcal{F}}(RQ, SQ)$ such that $\tilde{\varphi}(Q)=Q$.
	
$\bullet$ $Q$ is strongly closed in $\mathcal{F}$ if no element of $Q$ is $\mathcal{F}$-conjugate to an element of $P\setminus Q$.
\end{definition}

Let  $\mathcal{F}$  be a saturated fusion system over $P$. Recall that $\mathcal{F}$ is constrained if there is a subgroup $Q\lhd \mathcal{F}$ which is $\mathcal{F}$-centric. If $\mathcal{F}$ is constrained, then a model for $\mathcal{F}$ is a finite group $G$ such that $P\in \operatorname{Syl}_{p}(G)$, $\mathcal{F}_{P}(G)=\mathcal{F}$ and $C_{G}(O_{p}(G))\leq O_{p}(G)$.  The model theorem \cite[Theorem I.4.9]{AKO} states that each constrained fusion system has models, and that they are unique up to isomorphism.

Note that $\mathcal{F}$ is nilpotent if $\mathcal{F}=\mathcal{F}_{P}(P)$. A non-nilpotent saturated fusion system $\mathcal{F}$ over $P$ is called sparse if the only proper fusion subsystem of $\mathcal{F}$ over $P$ is $\mathcal{F}_{P}(P)$. 

\begin{theorem}[{\cite[Theorem 3.5]{Glesser}}]\label{sparse}
Let $\mathcal{F}$ be a sparse fusion system over a finite $p$-group $P$.
If $p$ is odd, then $\mathcal{F}$ is constrained.
\end{theorem}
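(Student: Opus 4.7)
The plan is a proof by contradiction. Suppose $\mathcal{F}$ is a sparse saturated fusion system over $P$ with $p$ odd, but $\mathcal{F}$ is not constrained; equivalently, $O_p(\mathcal{F})$ fails to be $\mathcal{F}$-centric. The aim is to derive a contradiction from sparseness, which says the only proper saturated fusion subsystem of $\mathcal{F}$ supported on $P$ is $\mathcal{F}_P(P)$.

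My first move is to observe that any characteristic subgroup $K\leq P$ satisfies $N_P(K)=P$, so the normalizer subsystem $N_{\mathcal{F}}(K)$ is a saturated fusion subsystem of $\mathcal{F}$ supported on all of $P$. Apply this with $K=Z(J(P))$: the Kessar--Linckelmann extension to fusion systems of the Glauberman--Thompson $p$-nilpotency theorem (\cite[Theorem A]{KL}, cited in Section~1) asserts that $\mathcal{F}$ is nilpotent if and only if $N_{\mathcal{F}}(Z(J(P)))=\mathcal{F}_P(P)$. Since sparseness forces $\mathcal{F}$ non-nilpotent, $N_{\mathcal{F}}(Z(J(P)))\neq\mathcal{F}_P(P)$; but sparseness also forbids proper intermediate subsystems over $P$, so $N_{\mathcal{F}}(Z(J(P)))=\mathcal{F}$. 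Hence $Z(J(P))\lhd\mathcal{F}$, and in particular $Z(J(P))\leq O_p(\mathcal{F})$.

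The next step upgrades this normality to the $\mathcal{F}$-centricity of $O_p(\mathcal{F})$. Because $\mathcal{F}$ is not constrained, $P$ itself is not $\mathcal{F}$-normal, so Alperin's fusion theorem supplies a proper $\mathcal{F}$-essential subgroup $E<P$. The normalizer $N_{\mathcal{F}}(E)$ is constrained (with $E$ as a normal $\mathcal{F}$-centric subgroup), so by the model theorem it has a finite-group model $M_E$ with $N_P(E)\in\operatorname{Syl}_p(M_E)$ and $C_{M_E}(O_p(M_E))\leq O_p(M_E)$. Because $p$ is odd, Glauberman's ZJ theorem applies to $M_E$: provided $M_E$ is $p$-stable, $Z(J(N_P(E)))$ is characteristic in $M_E$, and this in turn pushes further normality statements back into $\mathcal{F}$. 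Iterating across all $\mathcal{F}$-essential subgroups, and exploiting the rigidity of sparseness (every intermediate saturated subsystem over $P$ collapses to $\mathcal{F}$ or $\mathcal{F}_P(P)$), one aims to conclude $C_P(O_p(\mathcal{F}))\leq O_p(\mathcal{F})$, contradicting the assumption.

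The main obstacle is this last step: passing from the $\mathcal{F}$-normality of $Z(J(P))$ to the $\mathcal{F}$-centricity of $O_p(\mathcal{F})$ requires controlling the non-$p$-stable sections that may occur in the models $M_E$. For $p$ odd these have a very restricted shape (roughly $\operatorname{SL}_2(p)$-type subquotients), and Glauberman's analysis of $p$-stability lets one either exclude them or absorb them into $O_p(\mathcal{F})$. A secondary difficulty is that sparseness is a global rigidity condition while the ZJ-type argument is local to each essential subgroup; marshalling the local data into a global contradiction is where most of the technical work lies, and is presumably where the odd-prime hypothesis is used essentially, since Glauberman--Thompson-type conclusions fail at $p=2$.
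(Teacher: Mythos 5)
The paper does not prove this statement at all: it is imported verbatim as \cite[Theorem 3.5]{Glesser}, so there is no internal proof to compare yours against, and I can only judge whether your sketch stands on its own. Its first half does. Since $Z(J(P))$ is characteristic in $P$, the subsystem $N_{\mathcal{F}}(Z(J(P)))$ is saturated over all of $P$; by the Kessar--Linckelmann form of the Glauberman--Thompson theorem it cannot equal $\mathcal{F}_P(P)$ (as a sparse system is by definition not nilpotent), and sparseness then forces $N_{\mathcal{F}}(Z(J(P)))=\mathcal{F}$, i.e.\ $Z(J(P))\trianglelefteq\mathcal{F}$. That step is correct and is the natural opening move.

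The gap is everything after that. Normality of $Z(J(P))$ only yields $1\neq Z(J(P))\le O_p(\mathcal{F})$; being constrained requires the much stronger statement that $O_p(\mathcal{F})$ is $\mathcal{F}$-centric, and there are non-constrained saturated fusion systems with nontrivial $O_p$. Your plan for bridging this --- take models $M_E$ of the local subsystems $N_{\mathcal{F}}(E)$ at essential subgroups, apply Glauberman's ZJ theorem to each ``provided $M_E$ is $p$-stable,'' and ``iterate'' --- is not an argument: you give no reason the models are $p$-stable, no mechanism for assembling the local characteristic subgroups into a single normal $\mathcal{F}$-centric subgroup, and you yourself describe the conclusion as something one ``aims'' to reach. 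Note also that sparseness, as defined here, constrains only subsystems \emph{over $P$}, so it gives no leverage on the subsystems $N_{\mathcal{F}}(E)$, which live over $N_P(E)<P$; the ``rigidity of sparseness'' you invoke in the local analysis is simply not available there. A more promising way to use what you have is to keep working with subsystems over $P$ itself: $N_{\mathcal{F}}(J(P))$ is one such (as $J(P)$ is characteristic in $P$), so by sparseness it is either $\mathcal{F}$ or $\mathcal{F}_P(P)$; in the first case you are done at once, because $J(P)$ is then normal and satisfies $C_P(J(P))=Z(J(P))\le J(P)$, hence is a normal $\mathcal{F}$-centric subgroup. The case $N_{\mathcal{F}}(J(P))=\mathcal{F}_P(P)$ is the one your sketch leaves genuinely open. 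As written, the proposal proves only that $O_p(\mathcal{F})\neq 1$, not that $\mathcal{F}$ is constrained.
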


\begin{definition}{\cite[Definition 1.1]{Kizmaz}}
Let $G$ be a finite group, let $P$ be a Sylow $p$-subgroup of $G$, and let $D$ be a nonempty subset of $P$. We say that $D$ is a strongly closed subset in $P$ (with respect to $G$) if for all $U\subseteq D$ and $g\in G$ such that $U^{g} \subseteq P$, we have $U^{g} \subseteq D$.
\end{definition}

\begin{definition}{\cite[Page 22, Definition]{Glauberman71}}
A finite group $G$ is called $p$-stable if $gC_{G}(Q)$ lies in $O_{p}(N_{G}(Q)/C_{G}(Q))$ whenever $Q$ is a $p$-subgroup of $G$, $g\in N_{G}(Q)$ and $[Q,g,g] = 1$.
\end{definition}

\begin{lemma}[{\cite[9.4.5]{KS}}]\label{stable}
Let $p$ be an odd prime. A group $G$ is $p$-stable if $G$ has abelian Sylow $2$-subgroups.
\end{lemma}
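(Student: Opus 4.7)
My plan is to argue by contradiction and show that a failure of $p$-stability forces $G$ to involve $\operatorname{SL}_2(p)$ as a section; the hypothesis on Sylow $2$-subgroups will then yield a contradiction because $\operatorname{SL}_2(p)$ has a generalized quaternion Sylow $2$-subgroup for odd $p$. I would begin by supposing that there exist a $p$-subgroup $Q\le G$ and an element $g\in N_G(Q)$ with $[Q,g,g]=1$ but $\bar g:=gC_G(Q)\notin O_p(\bar N)$, where $\bar N:=N_G(Q)/C_G(Q)$. Since the quadratic condition descends to the Frattini quotient $Q/\Phi(Q)$ and the kernel of the natural map $\bar N\to\operatorname{Aut}(Q/\Phi(Q))$ is a $p$-group contained in $O_p(\bar N)$, choosing a counterexample with $|Q|$ minimal reduces us to the case in which $Q$ is elementary abelian.

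Next I would view $Q$ as a faithful $\mathbb{F}_p\bar N$-module. The identity $(\bar g-1)^2=0$ on $Q$ forces the minimal polynomial of $\bar g$ to divide $(X-1)^2$, so $\bar g$ is unipotent and in particular a $p$-element of $\bar N$. Since $\bar g\notin O_p(\bar N)$, the Baer--Suzuki theorem supplies some $\bar x\in\bar N$ for which $\bar H:=\langle\bar g,\bar g^{\bar x}\rangle$ is not a $p$-group. Because $Q$ is $\bar N$-invariant, the conjugate $\bar g^{\bar x}$ also acts quadratically on $Q$, so $\bar H$ is generated by two quadratically acting $p$-elements on the faithful $\mathbb{F}_p$-module $Q$. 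The heart of the argument will then be to invoke the classical quadratic-action analysis that underpins the proof of Glauberman's $ZJ$-theorem to conclude that, for odd $p$, $\bar H$ possesses a section isomorphic to $\operatorname{SL}_2(p)$.

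Finally, lifting this section back through $\bar N=N_G(Q)/C_G(Q)$ produces a section of $G$ isomorphic to $\operatorname{SL}_2(p)$. Since the property of having abelian Sylow $2$-subgroups is inherited by every section of $G$, and $\operatorname{SL}_2(p)$ for odd $p$ has a generalized quaternion, and therefore non-abelian, Sylow $2$-subgroup, we obtain the desired contradiction. The main obstacle is the quadratic-action lemma producing the $\operatorname{SL}_2(p)$-section from a pair of quadratic $p$-elements; the reductions at the start and the Sylow-$2$ comparison at the end are routine.
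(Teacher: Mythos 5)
The paper does not prove this lemma at all: it is quoted verbatim from Kurzweil--Stellmacher \cite[9.4.5]{KS}, so there is no in-paper argument to compare against. Your sketch is essentially the standard proof from that reference: one first shows that $G$ is $p$-stable whenever $\operatorname{SL}_2(p)$ is not involved in $G$ (via Baer--Suzuki and the quadratic-action machinery), and then observes that a group with abelian Sylow $2$-subgroups cannot involve $\operatorname{SL}_2(p)$, whose Sylow $2$-subgroups are (generalized) quaternion. All the individual steps you list are correct: quadratic action passes to $Q/\Phi(Q)$; the kernel of $\bar N\to\operatorname{Aut}(Q/\Phi(Q))$ is a normal $p$-subgroup of $\bar N$, so it suffices to work with the faithful module $V=Q/\Phi(Q)$; Baer--Suzuki applies since $\bar g$ is a $p$-element not in $O_p(\bar N)$; conjugates of $\bar g$ again act quadratically; and abelian Sylow $2$-subgroups pass to sections.

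Two small cautions. First, your framing ``choose a counterexample with $|Q|$ minimal'' does not literally effect the reduction to elementary abelian $Q$, because $Q/\Phi(Q)$ is not a subgroup of $G$; but this framing is unnecessary, since the Frattini-kernel observation you make in the same sentence already reduces the problem to the module $V=Q/\Phi(Q)$ directly. Second, the step you correctly identify as the heart of the matter --- that two quadratically acting $p$-elements on a faithful $\mathbb{F}_p$-module, generating a non-$p$-group, force an $\operatorname{SL}_2(p)$ section when $p$ is odd --- is a genuine theorem (it is the content of the earlier results in \cite[Section 9.4]{KS}), but it is nontrivial and your proposal only invokes it; as written the proof is an outline resting on that black box rather than a self-contained argument.
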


\begin{lemma}[{\cite[3.2.8]{KS}}]\label{lemma3.2.8}
Let $N$ be a normal subgroup of $G$ with factor group $\bar{G}= G/N$, and let $P$ be a $p$-subgroup of $G$. Assume that $(|N|, p)=1$. Then $N_{\bar{G}}(\bar{P})=\overline{N_{G}(P)}$ and $C_{\bar{G}}(\bar{P})=\overline{C_{G}(P)}$.
\end{lemma}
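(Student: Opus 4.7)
The plan is to handle the two equalities in turn, noting that in each case one inclusion is completely trivial and only the reverse needs work. The trivial direction: if $x \in N_G(P)$ (resp.\ $x \in C_G(P)$), then $\bar x \in N_{\bar G}(\bar P)$ (resp.\ $\bar x \in C_{\bar G}(\bar P)$), so $\overline{N_G(P)} \subseteq N_{\bar G}(\bar P)$ and $\overline{C_G(P)} \subseteq C_{\bar G}(\bar P)$. The coprimality hypothesis $(|N|,p)=1$ will be used twice: first to force $P$ to be a Sylow $p$-subgroup of $PN$, and second to force $P \cap N = 1$.

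For the normalizer equality, I would take $\bar g \in N_{\bar G}(\bar P)$ and lift to $g \in G$, so that $\bar P^{\bar g} = \bar P$ translates to $P^g N = P N$. Since $(|N|,p)=1$, the order $|PN| = |P|\cdot |N|/|P\cap N|$ has $p$-part equal to $|P|$, so both $P$ and $P^g$ are Sylow $p$-subgroups of $PN$. By Sylow's theorem applied inside $PN$, there exists $n \in N$ with $P^{gn} = (P^g)^n = P$. Hence $gn \in N_G(P)$ and $\bar g = \overline{gn} \in \overline{N_G(P)}$.

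For the centralizer equality, I would use the normalizer part to reduce: given $\bar g \in C_{\bar G}(\bar P) \subseteq N_{\bar G}(\bar P) = \overline{N_G(P)}$, replace $g$ by an element of $N_G(P)$ lying in the same coset, so we may assume $g \in N_G(P)$. Then for every $x \in P$, the commutator $[x,g] = x^{-1}x^g$ lies in $P$ (because $g$ normalizes $P$), and simultaneously $[x,g] \in N$ (because $\bar g$ centralizes $\bar x$). Hence $[x,g] \in P \cap N$, which is a $p$-subgroup of $N$ and therefore trivial by coprimality. Thus $[x,g]=1$ for all $x\in P$, giving $g \in C_G(P)$ and $\bar g \in \overline{C_G(P)}$.

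There is no real obstacle here; the lemma is a standard coprime-action Frattini-type fact, and the only thing one must be careful about is the order in which the two parts are proved, since the centralizer statement is most cleanly derived by first invoking the normalizer statement so that the relevant element can be taken to already lie in $N_G(P)$. The two instances of the hypothesis $(|N|,p)=1$ (Sylow's theorem inside $PN$, and triviality of $P \cap N$) are the load-bearing uses of the assumption and should be flagged explicitly.
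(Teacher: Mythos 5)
Your proof is correct and is essentially the standard argument for this fact (the paper itself gives no proof, citing it directly from Kurzweil--Stellmacher [3.2.8], whose proof runs along the same lines: Sylow in $PN$ plus a Frattini-type conjugation for the normalizer, then $[x,g]\in P\cap N=1$ for the centralizer). The only step you gloss over is that Sylow's theorem in $PN$ a priori gives a conjugating element $h\in PN$ rather than $n\in N$; writing $h=nu$ with $n\in N$, $u\in P$ and absorbing $u$ into $N_G(P)$ fixes this immediately, so there is no real gap.
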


\begin{theorem}[{\cite[Theorem 3.23]{Isaacs}}]\label{coprime-action}
Let $A$ act via automorphisms on $G$, where $A$ and $G$ are finite groups, and suppose that $(|A|, |G|)=1$. Assume also that at least one of $A$ or $G$ is solvable. Then for each prime $p$,  there exists an $A$-invariant Sylow $p$-subgroup of $G$. 
\end{theorem}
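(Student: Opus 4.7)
The forward direction is immediate. For the converse I argue by contradiction, taking a counterexample $\mathcal{F}$ with $|P|$, and then $|\operatorname{Mor}(\mathcal{F})|$, minimal; I may assume $I_{\mathcal{A}|D}\neq 1$, since if $\mathcal{A}|D=\emptyset$ the hypothesis reads $N_\mathcal{F}(1)=\mathcal{F}=\mathcal{F}_P(P)$ and there is nothing to prove. A proper saturated subsystem $\mathcal{F}'\subsetneq \mathcal{F}$ over the same $P$ inherits all the hypotheses: condition (ii) is intrinsic to $P$; condition (i) only strengthens under passage to the subgroup $\operatorname{Aut}_{\mathcal{F}'}(Q)\leq \operatorname{Aut}_\mathcal{F}(Q)$; $D$ remains strongly closed in $\mathcal{F}'$; and $N_{\mathcal{F}'}(I_{\mathcal{A}|D})=\mathcal{F}_P(P)$, since it sits between $\mathcal{F}_P(P)$ and $N_\mathcal{F}(I_{\mathcal{A}|D})=\mathcal{F}_P(P)$. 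Minimality then makes every such $\mathcal{F}'$ equal to $\mathcal{F}_P(P)$, so $\mathcal{F}$ is sparse; Theorem~\ref{sparse} together with $p$ odd renders $\mathcal{F}$ constrained, and the model theorem produces a finite group $G$ with $P\in\operatorname{Syl}_p(G)$, $\mathcal{F}_P(G)=\mathcal{F}$, and $C_G(O_p(G))\leq O_p(G)$.

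The plan is to show $I_{\mathcal{A}|D}\trianglelefteq G$; this will give $I_{\mathcal{A}|D}\lhd \mathcal{F}$, hence $N_\mathcal{F}(I_{\mathcal{A}|D})=\mathcal{F}$, contradicting $N_\mathcal{F}(I_{\mathcal{A}|D})=\mathcal{F}_P(P)$ in conjunction with non-nilpotency. Since $D$ is strongly closed in $\mathcal{F}$, it is a strongly closed subset of $P$ with respect to $G$ in K\i zmaz's sense. Condition (i) applied with $Q=D$ says $I_{\mathcal{A}|D}$ is $\operatorname{Aut}_\mathcal{F}(D)=N_G(D)/C_G(D)$-invariant, hence normalised by $N_G(D)$, and in particular $I_{\mathcal{A}|D}\trianglelefteq P$; the task is to propagate this invariance to all of $G$.

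For this I mimic K\i zmaz's strongly-closed \emph{ZJ}-argument. Take $G$ to be a minimal counterexample to $I_{\mathcal{A}|D}\trianglelefteq G$. Using coprime action (Theorem~\ref{coprime-action}), Lemma~\ref{lemma3.2.8}, and $p$-stability (Lemma~\ref{stable}), I reduce to $G=PQ$ with $Q$ a normal $q$-subgroup ($q\neq p$) on which $I_{\mathcal{A}|D}$ acts nontrivially. This failure of centralisation furnishes, after suitable bookkeeping inside $D$, a normal $B\unlhd P$ with $B\leq D$ of nilpotency class at most two and an $A\in\mathcal{A}|D$ with $[B,B]\leq A$ but $B\not\leq N_P(A)$. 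Condition (ii) is then exactly the Thompson replacement formula: for each $b\in N_B(N_P(A))-N_B(A)$ it produces $A^{*}=(A\cap A^b)[A,b]\in\mathcal{A}$, and since $b\in B\leq D$ with $D$ strongly closed, $A^{*}\leq D$ as well, hence $A^{*}\in\mathcal{A}|D$. A suitable monovariant -- for instance the pair $(|A|,|B:N_B(A)|)$ in reverse-lexicographic order -- improves strictly at each step, so the loop terminates with every relevant member of $\mathcal{A}|D$ being $B$-invariant. This forces $Q$ to normalise $I_{\mathcal{A}|D}$, contradicting the choice of $G$.

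The principal obstacle is the replacement step. One must verify both that the chosen monovariant genuinely decreases (so the loop terminates) and, more delicately, that $A^{*}$ stays inside $\mathcal{A}|D$ rather than escaping to a member of $\mathcal{A}$ outside $D$. The strong closure of $D$ handles the latter whenever the replacement element $b$ lies in $D$, which is the point of arranging $B\leq D$ in the reduction. Granted this, the remainder of the ZJ-type machinery transports routinely from K\i zmaz's group-theoretic setting to the present fusion-theoretic one via the model theorem.
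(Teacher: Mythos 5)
Your proposal does not address the statement at all. The statement you were asked to prove is the coprime-action result quoted from Isaacs (\cite[Theorem 3.23]{Isaacs}): if a finite group $A$ acts via automorphisms on a finite group $G$ with $(|A|,|G|)=1$ and at least one of $A$, $G$ solvable, then $G$ has an $A$-invariant Sylow $p$-subgroup for every prime $p$. What you have written instead is a proof sketch of Theorem~B of the paper (the nilpotency criterion for $\mathcal{F}$ in terms of $N_{\mathcal{F}}(I_{\mathcal{A}|D})$ for a strongly closed subgroup $D$). Nothing in your argument --- the minimal counterexample over $P$, sparseness, the model theorem, the replacement step --- bears on the existence of an $A$-invariant Sylow subgroup under a coprime action.

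For the record, the paper itself offers no proof of this statement either; it is imported as background (the standard argument runs through the semidirect product $\Gamma = G \rtimes A$: by Sylow's theorem and coprimality one shows, using the solvability hypothesis via the Schur--Zassenhaus conjugacy theorem, that some Sylow $p$-subgroup of $G$ is normalised by a conjugate of $A$, whence a conjugate of it is $A$-invariant). If your proposal was meant for Theorem~B, it should be evaluated against the paper's proof of that theorem, which goes by reducing to a constrained model $G$ and invoking Lemma~4 (whose proof in turn rests on Lemmas~2 and~3); your sketch of the replacement/termination step also leaves the monovariant argument and the reduction to $G=PQ$ unverified, whereas the paper's Lemma~2 (Step~6) handles termination by maximising $A^{*}\cap B$ as in Remark~2.10. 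But as a proof of the stated theorem, the proposal is simply not a proof of it.
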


\begin{theorem}[{Frobenius, \cite[Theorem 5.26]{Isaacs}}]\label{Frobenius}
Let $G$ be a finite group, and suppose $p$ is a prime. Then the following are equivalent.
\begin{itemize}
\item[(i)] $G$ is $p$-nilpotent. 
	
\item[(ii)] $N_{G}(Q)$ is $p$-nilpotent for every nonidentity $p$-subgroup $Q\leq G$. 
	
\item[(iii)] $N_{G}(Q)/C_{G}(Q)$ is a $p$-group for every $p$-subgroup $Q\leq G$.
\end{itemize}	

\end{theorem}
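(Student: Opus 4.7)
The plan is to establish the cyclic chain (i) $\Rightarrow$ (ii) $\Rightarrow$ (iii) $\Rightarrow$ (i). The first two implications are routine consequences of coprime action, while the third is the classical Frobenius normal $p$-complement theorem and contains essentially all of the work.

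For (i) $\Rightarrow$ (ii), I would let $K = O_{p'}(G)$, so that $G/K$ is a $p$-group; for any nonidentity $p$-subgroup $Q$, the intersection $N_G(Q) \cap K$ is a normal $p'$-subgroup of $N_G(Q)$, while the quotient $N_G(Q)/(N_G(Q)\cap K)$ embeds in the $p$-group $G/K$, giving $N_G(Q)$ a normal $p$-complement. For (ii) $\Rightarrow$ (iii), set $H = N_G(Q)$ and $L = O_{p'}(H)$; since both $Q$ and $L$ are normal in $H$ with coprime orders, $[Q,L] \leq Q \cap L = 1$, hence $L \leq C_H(Q) = C_G(Q)$, and $N_G(Q)/C_G(Q)$ is a quotient of the $p$-group $H/L$.

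The substantive direction is (iii) $\Rightarrow$ (i), which I would prove by induction on $|G|$. First I would verify that (iii) passes to subgroups (because $N_H(Q)/C_H(Q)$ embeds in $N_G(Q)/C_G(Q)$) and to quotients $G/N$ with $N \leq O_{p'}(G)$ (using Lemma~\ref{lemma3.2.8}). The second observation together with induction lets me reduce to the case $O_{p'}(G)=1$, and the first shows that in a minimal counterexample every proper subgroup of $G$ is $p$-nilpotent. In particular, for a Sylow $p$-subgroup $P$ the normaliser $N_G(P)$ is proper (else one easily reduces to $G$ a $p$-group) and hence $p$-nilpotent; the same coprime-action trick as in (ii)$\Rightarrow$(iii) forces $N_G(P) = P \times O_{p'}(N_G(P))$, so $N_G(P) = P C_G(P)$. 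Finally, I would apply Alperin's fusion theorem: since $N_G(Q)/C_G(Q)$ is a $p$-group for every $p$-subgroup $Q$, every $G$-conjugation between subsets of $P$ can be rewritten as a composition of conjugations by $p$-elements of local normalisers, and after $G$-conjugating into $P$, by elements of $P$ itself. Thus $P$ controls its own $G$-fusion, whence $P \cap [G,G] = [P, N_G(P)] = [P,P]$, and the transfer map produces a normal $p$-complement, contradicting the choice of $G$.

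The main obstacle will be the fusion-theoretic step: upgrading the purely local condition that $\operatorname{Aut}_G(Q)$ is a $p$-group for every $p$-subgroup $Q$ to the global statement that $P$ controls $G$-fusion in $P$, via Alperin. Once this fusion control is established the focal-subgroup / transfer argument supplying the normal $p$-complement is standard, but it is the Alperin reduction that carries the real content of Frobenius's theorem and is the step that has to be paid for to bridge conditions (iii) and (i).
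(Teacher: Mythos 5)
This is one of the background results the paper quotes verbatim from \cite[Theorem 5.26]{Isaacs} without proof, so there is no internal argument to compare yours against; I can only judge the proposal on its own merits, and it is correct. The cycle (i)$\Rightarrow$(ii)$\Rightarrow$(iii) is handled exactly as it should be (subgroups of $p$-nilpotent groups are $p$-nilpotent; the coprime commutator argument $[Q,O_{p'}(N_G(Q))]\leq Q\cap O_{p'}(N_G(Q))=1$ puts $O_{p'}(N_G(Q))$ inside $C_G(Q)$). For the substantive direction (iii)$\Rightarrow$(i) you take the Alperin--focal-subgroup route, whereas the cited source proves it purely with transfer evaluation and induction, never invoking Alperin's fusion theorem; your version trades a longer self-contained computation for a heavier black box, but both are standard and valid. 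Two compressions are worth making explicit if you write this up. First, in the Alperin step you must use a form of the fusion theorem in which the intermediate subgroups $Q_i$ satisfy $N_P(Q_i)\in\operatorname{Syl}_p\bigl(N_G(Q_i)\bigr)$ (tame intersections, or fully normalised subgroups): only then is $\operatorname{Aut}_P(Q_i)$ a Sylow $p$-subgroup of $\operatorname{Aut}_G(Q_i)$, so that hypothesis (iii) forces $N_G(Q_i)=N_P(Q_i)C_G(Q_i)$ and each local conjugation, restricted to subsets of $Q_i$, is realised by an element of $P$. Second, ``the transfer map produces a normal $p$-complement'' is an over-compression when $P$ is nonabelian: the focal subgroup theorem only yields $P\cap[G,G]=[P,P]$, hence a nontrivial abelian $p$-quotient and a proper normal subgroup $O^p(G)<G$; you then need to observe that $O^p(G)$ is $p$-nilpotent (every proper subgroup is, in your minimal counterexample) and that its normal $p$-complement is characteristic in $O^p(G)$, hence normal of $p$-power index in $G$. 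With those two points spelled out the argument is complete; neither is a genuine gap.
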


\begin{theorem}[{Thompson, \cite[Theorem]{Thompson}}]\label{Thompson-p}
Let $G$ be a finite group and $p$ be an odd prime. Let $P$ be a Sylow $p$-subgroup of $G$. If $C_{G}(Z(P))$ and $N_{G}(J(P))$ are $p$-nilpotent, then $G$ is $p$-nilpotent.
\end{theorem}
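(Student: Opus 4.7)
The plan is to prove Thompson's $p$-nilpotency criterion by induction on $|G|$ through a minimal counterexample. The forward direction is immediate, since $p$-nilpotency is inherited by any overgroup of $P$ in a $p$-nilpotent group. For the nontrivial direction, suppose the theorem fails and let $G$ be a counterexample of minimum order, so that $C_G(Z(P))$ and $N_G(J(P))$ are $p$-nilpotent while $G$ is not.

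First I would observe that every proper subgroup $H$ of $G$ with $P\le H$ is itself $p$-nilpotent: the subgroups $C_H(Z(P))\le C_G(Z(P))$ and $N_H(J(P))\le N_G(J(P))$ are $p$-nilpotent (being subgroups of $p$-nilpotent groups), $P$ remains a Sylow $p$-subgroup of $H$, and $|H|<|G|$, so the inductive hypothesis applies to $H$. Next, applying Lemma~\ref{lemma3.2.8} with the $p$-subgroups $Z(P)$ and $J(P)$ in place of $P$ and with $K=O_{p'}(G)$, one sees that $C_{\overline G}(Z(\overline P))$ and $N_{\overline G}(J(\overline P))$ are both $p$-nilpotent in $\overline G=G/K$; minimality of $|G|$ then forces $O_{p'}(G)=1$. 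Combined with the inheritance step, $G$ has no proper normal subgroup of $p'$-index, because any such $N$ would contain $P$, be $p$-nilpotent by inheritance, and have its characteristic normal $p$-complement lie in $O_{p'}(G)=1$, thus making $G$ itself $p$-nilpotent --- a contradiction.

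Since $G$ is not $p$-nilpotent, Frobenius' criterion (Theorem~\ref{Frobenius}) produces a nontrivial $p$-subgroup $Q$ with $N_G(Q)/C_G(Q)$ not a $p$-group; after Sylow-conjugation I may arrange $Q\le P$, and I would pick such $Q$ of maximal order --- necessarily $Q<P$ since $N_G(P)$ is $p$-nilpotent by inheritance. The remaining, and by far hardest, step is to derive a contradiction via the classical Thompson--Glauberman replacement technology. My approach is to pick an abelian subgroup $A$ of $P$ of maximal order (so $A\le J(P)$ and $Z(P)\le Z(J(P))\le A$) and, by iterated Thompson replacement among the abelian subgroups of maximal order, arrange $[A,Q,Q]=1$; then $p$-stability of the section $N_G(Q)/C_G(Q)$ --- accessible for odd $p$ through Lemma~\ref{stable} once the Sylow $2$-subgroup of the section has been arranged to be abelian --- upgrades this to $[A,Q]=1$, placing $Q\le C_G(A)\le C_G(Z(P))$, a $p$-nilpotent group. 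Combining this with the $p$-nilpotency of $N_G(J(P))$ via a Sylow-and-fusion analysis inside $N_G(Q)$ would then force $N_G(Q)$ to be $p$-nilpotent, contradicting the choice of $Q$. The main obstacle, which I fully expect to be the delicate heart of the proof, is justifying $p$-stability of the relevant section and carrying out the iterative replacement step --- this is the genuinely substantive module-theoretic part of Thompson's original argument, and the place where the hypotheses on both $C_G(Z(P))$ and $N_G(J(P))$ actually bite.
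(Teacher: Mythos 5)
The paper does not prove this statement at all: it is quoted as background (Theorem \ref{Thompson-p}) directly from Thompson's 1964 paper, so there is no internal proof to compare yours against. Judged on its own, your proposal is an outline rather than a proof. The reductions you do carry out (proper overgroups of $P$ are $p$-nilpotent, $O_{p'}(G)=1$ via Lemma \ref{lemma3.2.8} and minimality, existence via Theorem \ref{Frobenius} of a nontrivial $Q\le P$ with $N_G(Q)/C_G(Q)$ not a $p$-group) are standard and essentially correct, though one sub-step is off: from ``a proper normal subgroup $N$ of $p'$-index is a $p$-group'' you conclude that $G$ is $p$-nilpotent, which does not follow in general from having a normal Sylow $p$-subgroup; the correct route there is that $P\trianglelefteq G$ forces $G=N_G(J(P))$, which is $p$-nilpotent by hypothesis.

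The genuine gap is your final paragraph, which contains the entire substance of Thompson's theorem and which you explicitly leave as an acknowledged obstacle. ``By iterated Thompson replacement arrange $[A,Q,Q]=1$'', ``$p$-stability of the section \dots once the Sylow $2$-subgroup of the section has been arranged to be abelian'', and ``a Sylow-and-fusion analysis inside $N_G(Q)$ would then force $N_G(Q)$ to be $p$-nilpotent'' are each nontrivial claims with no argument attached: you do not explain why the replacement process terminates at an abelian $A$ of maximal order with $[A,Q,Q]=1$, you give no mechanism for ``arranging'' the abelian Sylow $2$-subgroup needed to invoke Lemma \ref{stable}, and you do not show how $Q\le C_G(A)\le C_G(Z(P))$ together with $p$-nilpotency of $N_G(J(P))$ yields $p$-nilpotency of $N_G(Q)$ --- yet $Q$ was chosen precisely so that $N_G(Q)$ is \emph{not} $p$-nilpotent, so the whole contradiction rests on these missing steps. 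A complete proof must either carry out Thompson's replacement and factorization analysis in detail or reduce explicitly to a quotable statement of it; as written, the proposal identifies the right strategy but does not establish the theorem.
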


D. Allcock generalizes the Thompson-Glauberman replacement theorem and gives an axiomatic version of the Glauberman’s ZJ-theorem in \cite{Allcock}.

\begin{theorem}[{\cite[Theorem 3.1]{Allcock}}]\label{replacement}
Let $p$ be a prime, $P$ be a finite $p$-group and $B\unlhd P$. If $p=2$ then assume $B$ is abelian. Suppose $A\leq P$ is abelian and contains $[B, B]$. Then either $B$ normalises $A$, or there exists $b\in N_{B}(N_{P}(A))-N_{B}(A)$.
For any such $b$, $A^{*}=(A\cap A^{b})[A,b]\leq AA^{b}$ has the properties
\begin{itemize}
\item[(i)] $A^{*}$ is abelian and contains $[B, B]$.	
\item[(ii)] $A^{*}\cap B$ strictly contains $A\cap B$ and is a proper subgroup of $B$.
\item[(iii)] $A^{*}$ and $A$ normalise each other.
\item[(iv)] $|A^{*}|=|A|$. 
\end{itemize}
\end{theorem}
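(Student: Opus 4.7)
The plan splits the proof into two stages: produce the required element $b$, then verify properties (i)--(iv) of $A^{*}=(A\cap A^{b})[A,b]$. For the first stage I invoke the standard $p$-group fact that every proper subgroup is strictly smaller than its normaliser. Since $B$ does not normalise $A$, the $p$-subgroup $BN_P(A)\leq P$ (well-defined as $B\unlhd P$) properly contains $N_P(A)$, so $N_P(A)\lneq N_{BN_P(A)}(N_P(A))$; decomposing any $x$ in the difference as $bk$ with $b\in B$ and $k\in N_P(A)$ produces the desired $b\in N_B(N_P(A))-N_B(A)$, since $k$ already normalises $N_P(A)$ and lies in $N_P(A)$.

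The decisive observation for the second stage is that $b\in N_P(N_P(A))$ forces $A^{b}\leq N_P(A)$ and $N_P(A^{b})=N_P(A)^{b}=N_P(A)$, so $A$ and $A^{b}$ are two abelian normal subgroups of $N_P(A)$. Consequently $[A,A^{b}]\leq A\cap A^{b}$, the intersection $A\cap A^{b}$ lies in $Z(AA^{b})$, and $AA^{b}$ has nilpotency class at most two. Moreover $[A,b]\leq [P,B]\leq B$ (from $B\unlhd P$), and combining the $b$-invariance of $[B,B]$ with $[B,B]\leq A$ forces $[B,B]\leq A\cap A^{b}\leq A^{*}$. Properties (ii) and (iii) now follow directly: for (iii), $A^{*}\leq AA^{b}\leq N_P(A)$ normalises $A$, and $A$ normalises both generating pieces of $A^{*}$ (it normalises $A\cap A^{b}$ because $A\leq N_P(A^{b})$, and $[A,b]$ because $[A,b]\unlhd\langle A,b\rangle$); for (ii), $A^{*}\cap B\lneq B$ (else $B\leq N_P(A)$), while $A^{*}\cap B\supsetneq A\cap B$ because for $a\in A$ with $a^{b}\notin A$ (which exists since $b\notin N_P(A)$) the commutator $[a,b]\in [A,b]\leq A^{*}\cap B$ lies outside $A$, and the inclusion $A\cap B\leq A^{*}$ uses $[B,B]\leq A$ (for $a\in A\cap B$, $[a,b]\in [B,B]\leq A$ forces $a^{b}\in A\cap A^{b}$, so $a=a^{b}[a,b]^{-1}\in A^{*}$).

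The genuine work concentrates in (i) and (iv). For (i), centrality of $A\cap A^{b}$ in $AA^{b}$ reduces the abelianness of $A^{*}$ to that of $[A,b]$; expanding $[[a_{1},b],[a_{2},b]]$ using class-$2$ bilinearity in $AA^{b}$ and the vanishing of $[a_{1},a_{2}]$, the task reduces to proving the symmetry $[a_{1},[a_{2},b]]=[a_{2},[a_{1},b]]$ for all $a_{1},a_{2}\in A$, which I would derive from the Hall--Witt identity applied to $(a_{1},b,a_{2})$---the term $[[a_{2},a_{1}],b]$ drops out because $A$ is abelian---together with the class-$2$ structure of $AA^{b}$ and $[B,B]\leq A$ (via $[A,b]\leq B$) to control the remaining conjugations. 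For (iv), the identity $A\cdot A^{*}=AA^{b}$ (which follows from $a^{b}=a[a,b]$) gives $|A|\cdot |A^{*}|/|A\cap A^{*}|=|A|^{2}/|A\cap A^{b}|$, reducing $|A^{*}|=|A|$ to $|A\cap A^{*}|=|A\cap A^{b}|$; this is verified via the structural description of $A^{*}$ inside $AA^{b}$, again using class-$2$ bilinearity and $[B,B]\leq A$ to analyse $[A,b]\cap A$. The principal obstacle is (i): the class-$2$ structure of $AA^{b}$ alone yields only $[[A,b],[A,b]]\leq A\cap A^{b}$, not triviality, so one must bring in the full hypothesis $[B,B]\leq A$ through a careful Hall--Witt-style expansion to force the required symmetry, and hence the abelianness of $A^{*}$.
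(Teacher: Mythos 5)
First, a remark on scope: the paper does not prove this statement at all --- it is quoted verbatim (in fact slightly abridged: Allcock's original also assumes $B$ has nilpotency class at most $2$) from \cite[Theorem 3.1]{Allcock} and used as a black box. So there is no in-paper proof to compare against, and your proposal has to stand on its own. Your first stage (producing $b$ via $N_P(A)\lneq N_{BN_P(A)}(N_P(A))$ and decomposing $x=bk$), your verification of (ii) and (iii), and your reductions of (i) and (iv) to, respectively, the symmetry $[a_1,[a_2,b]]=[a_2,[a_1,b]]$ and the equality $|A\cap A^{*}|=|A\cap A^{b}|$, are all correct and are the natural first moves.

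The genuine gap is in (i), which you yourself flag as the principal obstacle: the Hall--Witt identity applied to $(a_1,b,a_2)$ does not deliver the symmetry. Carrying out that application, the third factor dies because $A$ is abelian and the remaining identity simplifies (using $[x,y^{-1}]=([x,y]^{-1})^{y^{-1}}$ and centrality of $[H,H]$ in $H=AA^{b}$) to $[[a_2,b],a_1]=[[a_1,b],a_2]\,[[a_1,b],[a_2,b]]$, which is \emph{exactly} the relation already obtained by bilinear expansion of $[[a_1,b],[a_2,b]]=[a_1^{-1}a_1^{b},a_2^{-1}a_2^{b}]$ in the class-$2$ group $H$. It is a tautology, not new information; no amount of rearranging pure commutator identities inside $H$ can prove the symmetry, because the class-$2$ structure of $AA^{b}$ together with $[B,B]\le A\cap A^{b}\le Z(AA^{b})$ only forces $[A,b]$ to have class at most $2$. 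A telltale sign is that your plan for (i) never uses that $p$ is odd (nor the class-$\le 2$/abelian hypothesis on $B$), whereas the oddness of $p$ is essential here --- it is precisely why the theorem carves out the case $p=2$, and in Glauberman-type replacement arguments it enters by showing that the offending commutator (here $[[a_1,b],[a_2,b]]\in[B,B]$) is both equal to and inverse to its image under swapping $a_1\leftrightarrow a_2$, hence has square $1$, hence is trivial in a $p$-group with $p$ odd. Some such non-formal input must be supplied, and your plan does not identify it. A smaller but real gap of the same kind occurs in (iv): after your (correct) reduction, what remains is to show $A\cap[A,b]\le A^{b}$, equivalently $(A\cap A^{b})^{b}\le A$; this is again not a consequence of class-$2$ bilinearity alone (the classical proofs get $|A^{*}|\le|A|$ from maximality of $|A|$, which Allcock's version deliberately drops), so "analyse $[A,b]\cap A$" needs an actual argument.
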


\begin{remark}\label{remark2.10}
If we choose $A^{*}$ in Theorem \ref{replacement} such that $A^{*}\cap B$ is maximal, then $B$ normalises $A^{*}$.
\end{remark}

\begin{theorem}[{\cite[Theorem 1.1]{Allcock}}]\label{ZJ-axiomatic}
Suppose $p$ is a prime, $P$ is a finite $p$-group and $G$ is a finite group satisfying 
\begin{itemize}
\item[(a)] $P$ is a Sylow $p$-subgroup of $G$.
\item[(b)] $C_{G}(O_{p}(G))\leq O_{p}(G)$.
\item[(c)] $G$ acts $p$-stably on every normal $p$-subgroup of $G$.
\end{itemize}
Then $I_{\mathcal{A}}\unlhd G$ if $\mathcal{A}\subseteq \mathfrak{Ab}(P)$ satisfies the following properties:
\begin{itemize}
\item[(i)] For every $Q\unlhd P$, $I_{\mathcal{A}|Q}$ is $N_{G}(Q)$-invariant.
\item[(ii)] For every $B\unlhd P$ with nilpotent class at most two, if there exist members of $\mathcal{A}$ that contain $[B,B]$ but not $B$, then $B$ normalises one of them.
\end{itemize}
Furthermore, $I_{\mathcal{A}}$ is characteristic in $G$ if it  is characteristic in $P$.
\end{theorem}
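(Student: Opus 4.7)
The plan is to prove $I_{\mathcal{A}}\unlhd G$ by induction on $|G|$, adapting Glauberman's strategy for the classical ZJ-theorem to this axiomatic setting with Theorem \ref{replacement} playing the role of Thompson's original replacement step. Let $G$ be a minimal counterexample. Two instances of hypothesis (i) already carry substantial content: with $Q=P$ one obtains $I_{\mathcal{A}}\unlhd N_G(P)$, so $N_G(I_{\mathcal{A}})\supseteq P$; with $Q=O_p(G)$ the subgroup $I_{\mathcal{A}|O_p(G)}$ is $N_G(O_p(G))=G$-invariant. If every $A\in \mathcal{A}$ lies in $O_p(G)$ then $I_{\mathcal{A}}=I_{\mathcal{A}|O_p(G)}$ is already normal in $G$, so we may assume some $A\in \mathcal{A}$ satisfies $A\nleq O_p(G)$.

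The combinatorial heart of the proof is a replacement move. Starting from such an $A$, I would build a subgroup $B\unlhd P$ of nilpotent class at most two with $[B,B]\leq A$ but $B\nleq N_P(A)$, using that $A$ is abelian and splicing it with a suitable piece of $O_p(G)$. Hypothesis (ii) then guarantees that $B$ normalises some $A_0\in \mathcal{A}$ with $[B,B]\leq A_0$, and Theorem \ref{replacement} together with Remark \ref{remark2.10} produces, for each $b\in N_B(N_P(A))\setminus N_B(A)$, an abelian replacement $A^{*}=(A\cap A^{b})[A,b]$ of the same order; choosing $A^{*}$ so that $A^{*}\cap B$ is maximal makes $A^{*}$ itself $B$-invariant and strictly closer to $B$ than $A$ was. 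Iterating this move, one arrives at a member of $\mathcal{A}$ whose placement relative to $O_p(G)$ contradicts the existence of an $A$ lying outside $O_p(G)$.

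The closing step uses the $p$-stability hypothesis (c). Any $g\in G$ with $(I_{\mathcal{A}})^{g}\neq I_{\mathcal{A}}$ must, via the replacement argument, act quadratically on some normal $p$-subgroup $R$ of $G$, i.e.\ $[R,g,g]=1$. Hypothesis (c) then places $gC_G(R)$ in $O_p(N_G(R)/C_G(R))$, and combining this with (b), Lemma \ref{lemma3.2.8} and the invariance given by (i) forces $g\in N_G(I_{\mathcal{A}})$, a contradiction. The hardest step is organising the iterated replacement in the second paragraph: hypothesis (ii) asserts only the existence of \emph{some} $B$-normalised element of $\mathcal{A}$ containing $[B,B]$, not that the particular $A^{*}$ produced by Theorem \ref{replacement} lies in $\mathcal{A}$, so the iteration must be set up so that each step genuinely enlarges the tracked invariant (such as $|A^{*}\cap O_p(G)|$) and terminates in the configuration that (ii) rules out.

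For the final characteristic assertion, suppose $I_{\mathcal{A}}$ is characteristic in $P$ and we have already shown $I_{\mathcal{A}}\unlhd G$. Given $\varphi\in \mathrm{Aut}(G)$, Sylow's theorem produces $g\in G$ with $\varphi(P)=P^{g}$; the map $x\mapsto \varphi(x)^{g^{-1}}$ is then an automorphism of $P$, so it preserves $I_{\mathcal{A}}$, yielding $\varphi(I_{\mathcal{A}})=(I_{\mathcal{A}})^{g}=I_{\mathcal{A}}$ by normality. Hence $I_{\mathcal{A}}$ is characteristic in $G$.
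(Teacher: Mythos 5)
This statement is quoted from Allcock \cite[Theorem~1.1]{Allcock}; the paper gives no proof of it, so your attempt can only be measured against Allcock's argument, which is essentially reproduced (in the strongly-closed variant) in Lemmas \ref{strongly-closed1} and \ref{strongly-closed2} above. Your opening observations (the two instances of (i), and the reduction to the case where some $A\in\mathcal{A}$ is not contained in $O_{p}(G)$) and your closing Frattini-type argument for the ``characteristic'' assertion are correct. But the two central steps of your sketch contain genuine gaps. First, the subgroup $B$ to which hypothesis (ii) is applied is not something you can ``build by splicing $A$ with a piece of $O_{p}(G)$'': you give no construction, and there is no reason a normal subgroup of $P$ of class at most two with $[B,B]\leq A$ and $B\nleq N_{P}(A)$ should exist for your chosen $A$. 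The actual argument runs in the opposite direction: one takes $B$ to be a \emph{minimal} normal $p$-subgroup of $G$ with $I_{\mathcal{A}}\cap B\ntrianglelefteq G$ and \emph{derives} from that minimality that $B=(I_{\mathcal{A}}\cap B)^{G}$, that $[B,B]\leq I_{\mathcal{A}}$ (hence $[B,B]\leq A$ for every $A\in\mathcal{A}$), and that $B$ has class at most two --- exactly Steps 1--2 of Lemma \ref{strongly-closed1}. Without this, (ii) simply cannot be invoked.

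Second, your use of $p$-stability is misplaced. You assert that any $g$ with $(I_{\mathcal{A}})^{g}\neq I_{\mathcal{A}}$ ``must act quadratically on some normal $p$-subgroup''; nothing supports this, and it is not how (c) enters. The quadratic action in the real proof is that of the abelian subgroup $A$ on the normal subgroup $B$ it normalises: $[B,A,A]\leq[A,A]=1$, so (c) puts $AC_{G}(B)/C_{G}(B)$ inside $O_{p}(G/C_{G}(B))$, and together with (b) and the maximality of the normal subgroup $N$ normalising $I_{\mathcal{A}}\cap B$ this forces $A\leq N$, eventually yielding $B\leq I_{\mathcal{A}|P\cap N}\leq N_{P}(A)$, the desired contradiction; a second application of (c), with $I_{\mathcal{A}}$ acting on $O_{p}(G)$, gives $I_{\mathcal{A}}\leq O_{p}(G)$ and finishes (cf.\ Lemma \ref{strongly-closed2}). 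Finally, the iteration difficulty you flag at the end is a non-issue for the theorem as stated: in the ``existence'' form of (ii) used here, (ii) directly hands you a $B$-normalised member of $\mathcal{A}$ and no replacement iteration is required (the iteration, with the tracked invariant $|A^{*}\cap B|$ from Theorem \ref{replacement}(ii), is needed only for the closure form of (ii) used in Theorem \ref{main2}); your proposed invariant $|A^{*}\cap O_{p}(G)|$ is in any case unjustified since $A^{*}$ need not belong to $\mathcal{A}$.
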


\section{Preliminary Lemmas}\label{section3}

Before we prove the main theorems, we require some necessary lemmas.
\begin{lemma}\label{p-complement}
Let $p$ be an odd prime and $P$ be a Sylow $p$-subgroup of a finite group $G$. Suppose that $\mathcal{A}\subseteq \mathfrak{Ab}(P)$ satisfies the following properties:
\begin{itemize}
\item[(i)]  For every $Q\unlhd P$, $I_{\mathcal{A}|Q}$ is $N_{G}(Q)$-invariant.
\item[(ii)] For every $B\unlhd P$ with nilpotent class at most two, if there exist members of $\mathcal{A}$ that contain $[B,B]$ but not $B$, then $B$ normalises one of them.
\end{itemize}
If $N_{G}(I_{\mathcal{A}})$ is $p$-nilpotent, then $G$ is $p$-nilpotent.
\end{lemma}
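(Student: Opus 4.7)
The plan is to argue by contradiction, taking $G$ to be a counterexample of least order.

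First I reduce to $O_{p'}(G)=1$: passing to $\bar{G}=G/O_{p'}(G)$, the isomorphism $P\cong\bar{P}$ transfers condition (ii) on $\mathcal{A}$, while Lemma~\ref{lemma3.2.8} (which identifies $N_{\bar{G}}(\bar{Q})$ with $\overline{N_{G}(Q)}$ for every $p$-subgroup $Q$) transfers condition (i). Since $p$-nilpotency lifts through a $p'$-quotient and $N_{\bar{G}}(\overline{I_{\mathcal{A}}})=\overline{N_{G}(I_{\mathcal{A}})}$ is $p$-nilpotent, minimality of $|G|$ forces $O_{p'}(G)=1$. Then any proper subgroup $H$ with $P\le H<G$ inherits the full hypotheses with the same $\mathcal{A}$: the inclusion $N_{H}(Q)\le N_{G}(Q)$ preserves (i), condition (ii) depends only on $P$, and $N_{H}(I_{\mathcal{A}})\le N_{G}(I_{\mathcal{A}})$ remains $p$-nilpotent; minimality makes each such $H$ $p$-nilpotent.

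Thompson's criterion (Theorem~\ref{Thompson-p}) then applies: since $G$ itself is not $p$-nilpotent, one of $C_{G}(Z(P))$ or $N_{G}(J(P))$ fails to be $p$-nilpotent. Both contain $P$, so by the previous step one of them must equal $G$. Consequently $Z(P)\le Z(G)$ or $J(P)\unlhd G$, and in either case $O_{p}(G)\ne 1$.

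To finish, I invoke Allcock's axiomatic ZJ theorem (Theorem~\ref{ZJ-axiomatic}): granted its hypotheses, conditions (i), (ii) force $I_{\mathcal{A}}\unlhd G$, whence $N_{G}(I_{\mathcal{A}})=G$ is $p$-nilpotent, contradicting the choice of $G$. Condition (a) is immediate. The main obstacle is to verify (b) $C_{G}(O_{p}(G))\le O_{p}(G)$ and (c) $G$ acts $p$-stably on every normal $p$-subgroup. Both would follow once $G$ is shown to be $p$-solvable --- (b) via the Hall--Higman lemma (using $O_{p'}(G)=1$), and (c) via the $p$-stability of $p$-solvable groups with $p$ odd (cf.\ Lemma~\ref{stable}). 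The critical step is extracting $p$-solvability from the minimal-counterexample data (every proper $P$-overgroup is $p$-nilpotent and $O_{p}(G)\ne 1$): one must rule out non-abelian composition factors of $G$, for which the natural tack is to examine a hypothetical component $K$ --- noting that $K\cdot P$ contains $P$ while $K$ is not $p$-nilpotent forces $K\cdot P=G$ with $K$ quasi-simple --- and then leverage the interaction with $\mathcal{A}$ and the $p$-nilpotency of $N_{G}(I_{\mathcal{A}})$ to derive a contradiction.
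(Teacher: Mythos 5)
Your overall strategy (minimal counterexample, $O_{p'}(G)=1$, every proper overgroup of $P$ is $p$-nilpotent, $O_{p}(G)\neq 1$, then Allcock's Theorem \ref{ZJ-axiomatic}) matches the paper's, and the first three reductions are fine; your derivation of $O_{p}(G)\neq 1$ from Thompson's criterion is even a little more direct than the paper's Step 3. But the proposal stops exactly where the real work begins. You never actually prove $p$-solvability: your suggested route --- examine a hypothetical component $K$, note $KP=G$ with $K$ quasisimple, and ``leverage the interaction with $\mathcal{A}$'' --- is a gesture, not an argument; nothing in conditions (i)--(ii) obviously rules out a quasisimple normal subgroup, and this is the step the whole lemma hinges on. The paper instead shows $G/O_{p}(G)$ is $p$-nilpotent by a \emph{second} application of Theorem \ref{Thompson-p}: for $X\leq P$ with $\bar{X}=Z(\bar{P})$ or $J(\bar{P})$ one has $X\unlhd P$ and $O_{p}(G)<X$, so $N_{G}(X)$ cannot be all of $G$ and is therefore a proper overgroup of $P$, hence $p$-nilpotent, hence $N_{\bar{G}}(\bar{X})$ is $p$-nilpotent. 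This is what its more elaborate choice of $U$ maximizing $|N_{G}(U)|_{p}$ is set up to deliver.

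Second, even granting $p$-solvability, your verification of hypothesis (c) of Theorem \ref{ZJ-axiomatic} is incorrect: $p$-solvable groups with $p$ odd are \emph{not} in general $p$-stable when $p=3$ (any group involving $SL(2,3)$, e.g.\ $SL(2,3)\ltimes(\mathbb{Z}/3)^{2}$, is solvable but not $3$-stable), and Lemma \ref{stable} derives $p$-stability from \emph{abelian Sylow $2$-subgroups}, not from $p$-solvability. The paper therefore cannot simply invoke Allcock's theorem; it splits into cases (its Steps 5--6): if $G$ is $p$-stable, Theorem \ref{ZJ-axiomatic} gives $I_{\mathcal{A}}\unlhd G$ and the contradiction; if not, Lemma \ref{stable} forces the Sylow $2$-subgroups to be non-abelian, and a separate argument --- pulling back $\bar{P}Z(\bar{T})$ for a $\bar{P}$-invariant Sylow $2$-subgroup $\bar{T}$ of the normal $p$-complement of $G/O_{p}(G)$ to get a proper overgroup $V$ of $P$ whose normal $p$-complement must centralize $O_{p}(G)$ and hence be trivial --- yields the final contradiction. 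That entire case is missing from your proposal, so as written the proof does not close.
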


\begin{proof}
Suppose that $G$ is a minimal counterexample with respect to order of group. Then $G$ is not $p$-nilpotent.
Let $\mathcal{W}$ be the set of all nonidentity $p$-subgroup $W$ of $G$ such that $N_{G}(W)$ is not $p$-nilpotent.
By Theorem \ref{Frobenius}, there exists  $W\in \mathcal{W}$ as $G$ is not $p$-nilpotent.

\textbf{Step 1:}  If $P\leq H<G$, then $H$ is $p$-nilpotent.

Note that 	$N_{H}(Q)\leq N_{G}(Q)$ for $Q\leq P$. 
Hence the conditions of the lemma hold and $N_{H}(I_{\mathcal{A}})$ is $p$-nilpotent.
By the minimality of assumption, $H$ is $p$-nilpotent.
	
\textbf{Step 2:}  $O_{p'}(G)=1$.

Set $\bar{G}=G/O_{p'}(G)$. Then $\bar{P}$ is a Sylow $p$-subgroup of $\bar{G}$ and $I_{\bar{\mathcal{A}}}=\overline{I_{\mathcal{A}}}$.
By Lemma \ref{lemma3.2.8}, $N_{\bar{G}}(\overline{I_{\mathcal{A}}})=\overline{N_{G}(I_{\mathcal{A}})}$.
Thus, $N_{\bar{G}}(I_{\bar{\mathcal{A}}})$ is $p$-nilpotent.
For $\bar{Q}\unlhd \bar{P}$, we have $Q\unlhd P$ and $N_{\bar{G}}(\bar{Q})=\overline{N_{G}(Q)}$.
Hence $\bar{G}$ satisfies the conditions of the lemma.
If  $\bar{G}$ is $p$-nilpotent, then $\bar{G}$ is a $p$-group. Hence $G=PO_{p'}(G)$ and $G$ is $p$-nilpotent, a contradiction.
Thus $\bar{G}$ is not $p$-nilpotent. So $|\bar{G}|=|G|$ by minimality of $G$.
Hence  $O_{p'}(G)=1$.
	
\textbf{Step 3:} $O_{p}(G)\neq 1$.

Let $\mathcal{U}$ be the set of all subgroup $W\in \mathcal{W}$ such that $|N_{G}(W)|_{p}$ is maximal. 
We choose $U\in \mathcal{U}$ such that $|U|$ is maximal. 	
Let $N=N_{G}(U)$, and $S\in \operatorname{Syl}_{p}(N)$.
There is $g\in G$ such that $S^{g}\leq P$.
Replace $U$ by $U^{g}$, we may assume that $S\leq P$.  
Since $N$ is not $p$-nilpotent, it follows that  either $C_{N}(Z(S))$ or $N_{N}(J(S))$ is not $p$-nilpotent by Theorem \ref{Thompson-p}.
So there is $X\in \{Z(S), J(S)\}$ such that $N_{G}(X)$ is not $p$-nilpotent as a subgroup of $p$-nilpotent groups is again $p$-nilpotent.
Therefore $X\in \mathcal{W}$. 
Assume that $N<G$. By Step 1, $S$ is properly contained in $P$ as $N$ is not $p$-nilpotent.
Let $T=N_{P}(S)$. Then $S\lhd T$ and $S<T$.
Since $X$ is characteristic in $S$ and $S\lhd T$, we have $X\lhd T$. So $T\leq  N_{G}(X)$. 
Thus, we have 
$|N_{G}(U)|_{p}=|S|<|T|\leq |N_{G}(X)|_{p}$.
This contradicts the choice of $U$, we conclude that $N=G$.
So $U\unlhd G$.
It is easy to see that $O_{p}(G)\in \mathcal{U}$ and $U\leq O_{p}(G)$.
By the choice of $U$, we have $U=O_{p}(G)$. In particular, $O_{p}(G)\neq 1$.
	
\textbf{Step 4:}  $G/O_{p}(G)$ is $p$-nilpotent and $C_{G}(O_{p}(G))\leq O_{p}(G)$.
	
Let $\bar{G}=G/O_{p}(G)$. By Theorem \ref{Thompson-p}, it suffices to show that both $C_{\bar{G}}(Z(\bar{P}))$ and $N_{\bar{G}}(J(\bar{P}))$ are $p$-nilpotent. 
If $P=O_{p}(G)$, then $\bar{G}$ is $p$-nilpotent.
Now we assume that $P\neq O_{p}(G)$. Then $Z(\bar{P})$ and $J(\bar{P})$ are nontrivial. 
Let $X \leq P$ such that
$\bar{X}$ is either $Z(\bar{P})$ or $J(\bar{P})$.
Since  $\bar{X}\unlhd \bar{P}$, we see that $X\unlhd P$ and $P\leq N_{G}(X)$.
Thus, we have $|N_{G}(X)|_{p}=|P|=|G|_{p}$.
By Step 3, $O_{p}(G)$ is maximal in $\mathcal{U}$. Note that $O_{p}(G)<X$ implies $X\notin \mathcal{W}$.
Thus $N_{G}(X)$ is $p$-nilpotent. 
Since $X$ contains $O_{p}(G)$, we get that  $\overline{N_{G}(X)}=N_{\bar{G}}(\bar{X})$. So $N_{\bar{G}}(\bar{X})$ is $p$-nilpotent. 
Therefore, both $C_{\bar{G}}(Z(\bar{P}))$ and $N_{\bar{G}}(J(\bar{P}))$ are $p$-nilpotent. 
By Theorem \ref{Thompson-p}, $\bar{G}$ is $p$-nilpotent.

Since $G/O_{p}(G)$ is $p$-nilpotent, it follows that $G$ is $p$-solvable. Note that $O_{p'}(G)=1$ by Step 2. Thus by \cite[Theorem 6.3.2]{Gorenstein}, $C_{G}(O_{p}(G))\leq O_{p}(G)$.
	
\textbf{Step 5:} $G$ is not $p$-stable.
	
Suppose that  $G$ is a $p$-stable group, then $I_{\mathcal{A}}\unlhd G$ by Theorem \ref{ZJ-axiomatic}. So $G=N_{G}(I_{\mathcal{A}})$ is $p$-nilpotent, a contradiction. 
	
\textbf{Step 6:} Final contradiction.
	
If $G$ is not a $p$-stable group, then $G$ has non-abelian Sylow $2$-subgroups by Theorem \ref{stable}.
By Step 4, $G/O_{p}(G)$ is $p$-nilpotent. So that $G/O_{p}(G)$ has a normal $p$-complement $\bar{K}$ for some $K\unlhd G$.
Then $\bar{K}$ has non-abelian Sylow $2$-subgroups. 
Since the action of $\bar{P}$ on $\bar{K}$  is a coprime action, it follows that there exists a $\bar{P}$-invariant Sylow $2$-subgroup $\bar{T}$ of $\bar{K}$ by Theorem \ref{coprime-action}. 
Then  $Z(\bar{T})$ is also $\bar{P}$-invariant.
Let $V$ be the inverse image of $\bar{P}Z(\bar{T})$ in $G$. 
Then $P< V<G$ as $\bar{T}$ is non-abelian. Thus $V$ is $p$-nilpotent by Step 1. Hence $V$ has a normal $p$-complement $V_{0}$.
Since $[V_{0}, O_{p}(G)]\leq  V_{0}\cap O_{p}(G)=1$,
we get that $V_{0} \leq C_{G}(O_{p}(G))$. By Step 4, $C_{G}(O_{p}(G))\leq O_{p}(G)$.
Thus $V_{0} \leq O_{p}(G)$ and $V_{0}=1$,  a contradiction.
This final contradiction shows that $G$ is not a counterexample and $G$ is $p$-nilpotent.
\end{proof}

We generalise this lemma to the case of strongly closed subgroups.

\begin{lemma}\label{strongly-closed1}
Let $p$ be an odd prime and let $P$ be a Sylow $p$-subgroup of a $p$-stable group $G$. Suppose that $\mathcal{A}\subseteq \mathfrak{Ab}(P)$ satisfies the following properties:
\begin{itemize}
\item[(i)]  For every $Q\unlhd P$, $I_{\mathcal{A}|Q}$ is $N_{G}(Q)$-invariant.
\item[(ii)] For every $B\unlhd P$ with nilpotent class at most two, if there exists $A\in \mathcal{A}$ that $[B,B]\leq A$ and $B$ does not normalise $A$, then $A^{*}=(A\cap A^{b})[A,b]\in \mathcal{A}$ for every $b\in N_{B}(N_{P}(A))-N_{B}(A)$. 
%such that $A^{*}\cap B$ is maximal.
\end{itemize}	
If $D$ is a strongly closed subgroup in $P$,  and $B\unlhd G$ is a normal $p$-subgroup of $G$, then $I_{\mathcal{A}|D}\cap B\unlhd G$.
\end{lemma}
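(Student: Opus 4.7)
The plan is to argue by induction on $|G|$ in the spirit of Allcock's ZJ-type Theorem \ref{ZJ-axiomatic}, adapted to accommodate the strongly closed subgroup $D$ and the normal $p$-subgroup $B$. Set $J = I_{\mathcal{A}|D} \cap B$. Since $D$ is strongly closed in $P$, in particular $D \unlhd P$, so condition (i) at $Q = D$ yields that $I_{\mathcal{A}|D}$ is $N_{G}(D)$-invariant, hence $P$-invariant; thus $J$ is $P$-invariant. For any $g \in G$, applying the strong closure property to $J \subseteq D$ and $J^{g} \subseteq B \subseteq P$ gives $J^{g} \subseteq D \cap B$, so the task reduces to showing $J^{g} \leq A$ for every $A \in \mathcal{A}|D$.

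The driver of the proof is iterated application of the replacement theorem (Theorem \ref{replacement}), controlled by condition (ii). For any $A \in \mathcal{A}|D$ with $[B,B] \leq A$ but $B$ not normalising $A$, pick $b \in N_{B}(N_{P}(A)) - N_{B}(A)$; condition (ii) provides $A^{*} \in \mathcal{A}$, and the strong closure of $D$ (applied to $A \leq D$ with $A^{b} \leq P$) forces $A^{b} \leq D$ and hence $A^{*} = (A \cap A^{b})[A,b] \leq D$, so $A^{*} \in \mathcal{A}|D$. By Remark \ref{remark2.10}, the iteration terminates at some $\tilde{A} \in \mathcal{A}|D$ of the same order as $A$ and normalised by $B$. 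Assembling these produces a subfamily $\mathcal{A}^{*} \subseteq \mathcal{A}|D$ whose members are all $B$-invariant and satisfies $I_{\mathcal{A}^{*}} \cap B = J$.

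Passing to a minimal counterexample, the standard reductions (using Lemma \ref{lemma3.2.8}) yield $O_{p'}(G) = 1$, and one can further reduce to $B = O_{p}(G)$. If in this reduced situation $C_{G}(B) \leq B$, then the hypotheses of Theorem \ref{ZJ-axiomatic} are in force for the family $\mathcal{A}^{*}$ (using $p$-stability of $G$ for condition (c)), and one concludes $I_{\mathcal{A}^{*}} \unlhd G$, hence $J \unlhd G$, contradicting the choice of counterexample.

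The main obstacle is twofold. First, one must verify that $\mathcal{A}^{*}$ still satisfies condition (i) of Theorem \ref{ZJ-axiomatic}, namely that $I_{\mathcal{A}^{*}|Q}$ is $N_{G}(Q)$-invariant for every $Q \unlhd P$; this is delicate because the replacement procedure may a priori destroy this invariance and must be checked step by step. Second, one must secure $C_{G}(O_{p}(G)) \leq O_{p}(G)$ in the minimal counterexample without extra $p$-solvability hypotheses; if this fails directly, the intended workaround is to pass to the semidirect product $B \rtimes (G/C_{G}(B))$, in which faithful action automatically restores the centraliser hypothesis, and to transport the argument there using $p$-stability.
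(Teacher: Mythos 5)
Your proposal has genuine gaps at its core step. You propose to replace each non-$B$-normalised $A\in\mathcal{A}|D$ by a $B$-normalised $\tilde{A}$ and then feed the resulting family $\mathcal{A}^{*}$ into Theorem \ref{ZJ-axiomatic}, but (a) you never verify that $\mathcal{A}^{*}$ satisfies hypothesis (i) of that theorem (you acknowledge this is ``delicate'' and leave it unresolved --- and there is no visible mechanism for transferring the $N_{G}(Q)$-invariance of $I_{\mathcal{A}|Q}$ to $I_{\mathcal{A}^{*}|Q}$, since the replacement $A\mapsto A^{*}$ is tailored to one particular $B$ and is not equivariant under $N_{G}(Q)$); (b) hypothesis (ii) of Theorem \ref{ZJ-axiomatic} quantifies over \emph{all} $B'\unlhd P$ of class at most two, not just the given normal subgroup $B$ of $G$, and your $\mathcal{A}^{*}$ is not shown to satisfy it; and (c) even granting $I_{\mathcal{A}^{*}}\unlhd G$, this does not give $J\unlhd G$: Theorem \ref{replacement}(ii) makes $A^{*}\cap B$ \emph{strictly} larger than $A\cap B$, so one only gets $I_{\mathcal{A}^{*}}\cap B\supseteq J$, possibly strictly, and normality of the larger group says nothing about $J$. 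In addition, the reduction to $B=O_{p}(G)$ is unjustified (knowing $I_{\mathcal{A}|D}\cap O_{p}(G)\unlhd G$ does not yield $I_{\mathcal{A}|D}\cap B\unlhd G$ for a smaller normal $B$), and the proposed passage to $B\rtimes(G/C_{G}(B))$ to restore $C_{G}(O_{p}(G))\leq O_{p}(G)$ changes the Sylow $p$-subgroup and gives no way to transport $\mathcal{A}$, $D$, or the strong-closure hypothesis.

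For comparison, the paper's proof does not route through Theorem \ref{ZJ-axiomatic} at all and needs no centraliser hypothesis (that is only assumed in Lemma \ref{strongly-closed2}). It takes $B$ minimal among normal $p$-subgroups violating the conclusion, so that $B=(I_{D}\cap B)^{G}$ and $[B,B]\leq I_{D}$ with $B$ of class at most two; it introduces the largest normal subgroup $N$ normalising $I_{D}\cap B$ and sets $D^{*}=D\cap N$, obtaining $G=N_{G}(I_{D^{*}})N$ by a Frattini argument; it then uses $p$-stability to show that every $B$-normalised member of $\mathcal{A}|D$ lies in $D^{*}$ (via $O_{p}(G/N)=1$), and finally applies Theorem \ref{replacement} together with condition (ii) to a member $A\not\leq D^{*}$ to force $B\leq I_{D^{*}}\leq N_{P}(A)$, a contradiction. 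If you want to salvage your approach you would essentially have to reproduce this structure; the appeal to Theorem \ref{ZJ-axiomatic} cannot be made to work as stated.
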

\begin{proof}
If $B=1$, then it clearly holds.
Assume that $B\neq 1$. Let $G$ be a counterexample, and choose $B$ to be the smallest possible normal $p$-subgroup contradicting with the lemma. 
Let $N$ be the largest normal subgroup of $G$ that normalises $I_{D}\cap B$. Let $D^{*}=D\cap N$, $I_{D}=I_{\mathcal{A}|D}$  and $I_{D^{*}}=I_{\mathcal{A}|D^{*}}$. 
	
\textbf{Step 1:}  $B=(I_{D}\cap B)^{G}$.

Set $B_{1}=(I_{D}\cap B)^{G}$, which is generated by all the conjugates of $I_{D}\cap B$ in $G$. Clearly,  $B_{1}\leq B$. If $B_{1}<B$, then  $I_{D}\cap B_{1}\unlhd G$ by the choice of $B$.
Since $I_{D}\cap B\leq B_{1}$, we have $I_{D}\cap B\leq I_{D}\cap B_{1}\leq I_{D}\cap B$. Hence $I_{D}\cap B=I_{D}\cap B_{1}$, a contradiction.
Thus, $B_{1}=B$.
	
\textbf{Step 2:} $[B,B]\leq I_{D}$ and $B$ is nilpotent of class at most two. 
	
Clearly $B'=[B,B]<B$. By the choice of $B$, $I_{D}\cap B'\unlhd G$. Since $D$ is a strongly closed subgroup in $P$, we have $D^{x}=D$ if $x\in P$.
So $D\unlhd P$, and thus $I_{D}$ is $N_{G}(D)$-invariant by condition (i). In particular, $I_{D}$ is $P$-invariant.	
Hence $I_{D}$ and $B$ normalise each other. So we have $[I_{D}\cap B,B]\leq I_{D}\cap B'$. Since $B$ and $ I_{D}\cap B'$ are both normal subgroups of $G$, we obtain $[(I_{D}\cap B)^{g},B]\leq I_{D}\cap B'$ for all $g\in G$. This yields $B'=[B,B]=[(I_{D}\cap B)^{G},B]\leq I_{D}\cap B'$. Hence $B'\leq I_{D}$, and $[I_{D}\cap B,B']=1$.  Since $B=(I_{D}\cap B)^{G}$, it follows that $[B,B']=1$. So we see that $B$ is nilpotent of class at most two. 
	
\textbf{Step 3:}  $G=N_{G}(D^{*})N=N_{G}(I_{D^{*}})N$. 
	
Let $Q=P\cap N$. Then $Q\in \operatorname{Syl}_{p}(N)$. 
By Frattini argument, we have $G=N_{G}(Q)N$. 
Let $U\subseteq D^{*}$ and $g\in G$ such that $U^{g}\subseteq P$. It follows that $U^{g}\subseteq D$ as $U\subseteq D$ and $D$ is strongly closed in $P$. Since $N\unlhd G$ and $U\subseteq D^{*}\subseteq N$, we see that $U^{g}\subseteq N$. Therefore, $U^{g}\subseteq D\cap N=D^{*}$. Then $D^{*}$ is also strongly closed in $P$.	
If $x\in N_{G}(Q)$, then $(D^{*})^{x}\leq Q\leq P$. Since $D^{*}$ is strongly closed in $P$, we have $(D^{*})^{x}\leq D^{*}$. Hence $x\in N_{G}(D^{*})$. Thus $G=N_{G}(D^{*})N$. Since $D^{*}$ is strongly closed in $P$, we see that $D^{*}\unlhd P$. Thus, $I_{D^{*}}$ is $N_{G}(D^{*})$-invariant by condition (i). So $N_{G}(D^{*})\leq N_{G}(I_{D^{*}})$. Hence, $G=N_{G}(I_{D^{*}})N$.
	
\textbf{Step 4:} There exists an element of $\mathcal{A}|D$ that is not contained in $D^{*}$.
	
Assume that all the elements of the set $\mathcal{A}|D$ are contained in $D^{*}$. Then $\mathcal{A}|D$ is equal to $\mathcal{A}|D^{*}$, and $I_{D}=I_{D^{*}}$. Recall that  $D^{*}\unlhd P$, then $I_{D^{*}}$ is $N_{G}(D^{*})$-invariant by condition (i). Thus, $I_{D}$ is $N_{G}(D^{*})$-invariant. Hence $I_{D}\cap B$ is $N_{G}(D^{*})$-invariant. By Step 3, we have $G=N_{G}(D^{*})N$. 
Note that $N\leq N_{G}(I_{D}\cap B)$. So $I_{D}\cap B\unlhd G$, a contradiction.
	
\textbf{Step 5:} If $A\in \mathcal{A}|D$ and $B$ normalises $A$, then $A\leq D^{*}$

Since $[B,A]\leq A$ and $A$ is abelian, it follows that $[B,A,A]=1$.  
Since $G$ is $p$-stable and $ B\unlhd G$, we have that $AC/C \leq O_{p}(G/C)$ where $C=C_{G}(B)$. 
Note that $C$ normalises $I_{D}\cap B$ and $C\unlhd G$, and so $C\leq N$ by the choice of $N$. 
Let $L\unlhd G$ such that $L/C=O_{p}(G/C)$. Then $L\unlhd G$, $L/C$ is a $p$-group and $A\leq L$.
Then $C\leq N$ implies that $LN/N$ is a normal $p$-subgroup of $G/N$.
It follows that $AN/N \leq LN/N\leq O_{p}(G/N)$. 
Let $M\unlhd G$ such that $M/N = O_{p}(G/N)$. 
Then $M/N\leq PN/N$, and $M=M\cap PN=(M\cap P)N$. 
Hence $M$ normalises $I_{D}\cap B$ as $I_{D}$ is $P$-invariant. So $M\leq N$ by the choice of $N$. Thus, $O_{p}(G/N)=1$ and $A\leq N$.
Therefore, we have $A\leq D^{*}$ as $A\leq D$.

\textbf{Step 6:} Final contradiction.

By Step 4, there exists an element of the set $\mathcal{A}|D$ that is not contained in $D^{*}$. Choose $A\in \mathcal{A}|D$ such that $A \nleq D^{*}$. Then Step 5,  $B$ does not normalise $A$.
By Step 2, $[B,B]\leq I_{D}$ and so $[B,B]\leq A$.
By Theorem \ref{replacement}, there exists  $b\in N_{B}(N_{P}(A))-N_{B}(A)$ such that $A^{*}\cap B>A\cap B$, where $A^{*}=(A\cap A^{b})[A,b]$.
By condition (ii), $A^{*}\in \mathcal{A}$.
Since $D$ is strongly closed in $P$ and $A\leq D$, we have $A^{*}\leq AA^{b}\leq D$. Thus, $A^{*}\in \mathcal{A}|D$. 
If $B$ does not normalise $A^{*}$, then replace $A$ by $A^{*}$ and continue to apply Theorem \ref{replacement}. By induction, we may assume that  $B$ does not normalise $A$, and $B$ normalises $A^{*}$,
where $A, A^{*}=(A\cap A^{b})[A,b]\in  \mathcal{A}|D$ such that $A^{*}$ and $A$ normalise each other.
Since  $A^{*}\in \mathcal{A}|D$ and $B$ normalises $A^{*}$, by Step 5 we have $A^{*}\leq D^{*}$. So $I_{D^{*}}\leq A^{*}\leq N_{P}(A)$.
Note that $I_{D}\cap B\leq I_{D}\leq I_{D^{*}}$. By Step 3, $G=N_{G}(I_{D^{*}})N$. We have $B=(I_{D}\cap B)^{G}=(I_{D}\cap B)^{N_{G}(I_{D^{*}})}\leq (I_{D^{*}})^{N_{G}(I_{D^{*}})}=I_{D^{*}}$.
	Thus, $B\leq I_{D^{*}}\leq A^{*}\leq N_{P}(A)$, a contradiction. 
This final contradiction shows that $G$ is not a counterexample, and $I_{\mathcal{A}|D}\cap B\unlhd G$.
\end{proof}

\begin{lemma}\label{strongly-closed2}
Let $p$ be an odd prime and let $P$ be a Sylow $p$-subgroup of a $p$-stable group $G$. Suppose that $C_{G}(O_{p}(G))\leq O_{p}(G)$ and $\mathcal{A}\subseteq \mathfrak{Ab}(P)$ satisfies the following properties:
\begin{itemize}
\item[(i)]  For every $Q\unlhd P$, $I_{\mathcal{A}|Q}$ is $N_{G}(Q)$-invariant.
\item[(ii)] For every $B\unlhd P$ with nilpotent class at most two, if there exists $A\in \mathcal{A}$ that $[B,B]\leq A$ and $B$ does not normalise $A$, then $A^{*}=(A\cap A^{b})[A,b]\in \mathcal{A}$ for every $b\in N_{B}(N_{P}(A))-N_{B}(A)$. %such that $A^{*}\cap B$ is maximal.
\end{itemize}
If $D$ is a strongly closed subgroup in $P$, then $I_{\mathcal{A}|D}\unlhd G$.
\end{lemma}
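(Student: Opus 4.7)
The plan is to deduce the lemma from Theorem~\ref{ZJ-axiomatic} by applying it not to $\mathcal{A}$ itself but to the restricted family $\mathcal{A}|D \subseteq \mathfrak{Ab}(P)$. The three structural hypotheses (a)--(c) of Theorem~\ref{ZJ-axiomatic} are already in hand: (a) and (b) are given, and (c) is precisely the $p$-stability of $G$. The entire content of the proof is therefore to check that conditions (i) and (ii) of Theorem~\ref{ZJ-axiomatic} survive the restriction to $\mathcal{A}|D$, and at both points the strong closure of $D$ is the key ingredient.

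For condition~(i) of Theorem~\ref{ZJ-axiomatic} applied to $\mathcal{A}|D$: given $Q \unlhd P$, observe that $(\mathcal{A}|D)|Q = \mathcal{A}|(D\cap Q)$. Strong closure of $D$ forces $D\unlhd P$ (take $U=D$ and $g\in P$ in the definition), hence $D\cap Q \unlhd P$, so condition~(i) of our lemma applied to $D\cap Q$ yields that $I_{\mathcal{A}|(D\cap Q)}$ is $N_G(D\cap Q)$-invariant. It remains to show $N_G(Q) \le N_G(D\cap Q)$: for $g\in N_G(Q)$ one has $(D\cap Q)^g \le Q \le P$, so strong closure of $D$ places $(D\cap Q)^g$ inside $D$, hence inside $D\cap Q$, and equality of orders gives $g\in N_G(D\cap Q)$.

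For condition~(ii) of Theorem~\ref{ZJ-axiomatic} applied to $\mathcal{A}|D$: take $B \unlhd P$ of nilpotent class at most two and $A\in \mathcal{A}|D$ with $[B,B]\le A$, $B\not\le A$, and $B$ not normalising $A$. Theorem~\ref{replacement} supplies $b\in N_B(N_P(A)) - N_B(A)$ and an abelian $A^* = (A\cap A^b)[A,b]$ with $|A^*|=|A|$, $[B,B]\le A^*$, and $A\cap B < A^*\cap B < B$. Condition~(ii) of our lemma puts $A^*$ in $\mathcal{A}$; since $b\in B\le P$ and $A\le D\unlhd P$, strong closure gives $A^b \le D$ and hence $A^* \le D$, so $A^*\in \mathcal{A}|D$. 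Choosing $b$ to make $A^*\cap B$ maximal and invoking Remark~\ref{remark2.10}, or iterating the replacement (each step strictly increases $|A^{(k)}\cap B|$, bounded by $|B|$), one obtains an $A^{(k)}\in \mathcal{A}|D$ that contains $[B,B]$, does not contain $B$, and is normalised by $B$, as required.

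With conditions (i) and (ii) of Theorem~\ref{ZJ-axiomatic} verified for $\mathcal{A}|D$, that theorem immediately yields $I_{\mathcal{A}|D} \unlhd G$ and the lemma is proved. The main obstacle is the verification of~(ii): one has to ensure that the Allcock replacement, initially guaranteed only to lie in $\mathcal{A}$, actually stays inside $D$ (which is handled by strong closure together with $b\in P$), and that iteration terminates in a $B$-normalised member of $\mathcal{A}|D$ (which is handled by Theorem~\ref{replacement}(ii) and Remark~\ref{remark2.10}). An alternative route would apply Lemma~\ref{strongly-closed1} with $B = O_p(G)$ to obtain $I_{\mathcal{A}|D}\cap O_p(G) \unlhd G$ and then use the hypothesis $C_G(O_p(G)) \le O_p(G)$ to force $I_{\mathcal{A}|D} \le O_p(G)$, but the direct application of Theorem~\ref{ZJ-axiomatic} to $\mathcal{A}|D$ is shorter.
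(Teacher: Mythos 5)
Your argument is correct, but it takes a genuinely different route from the paper's. The paper does not invoke Theorem~\ref{ZJ-axiomatic} here at all: it first uses $p$-stability together with $[O_{p}(G),I_{\mathcal{A}|D},I_{\mathcal{A}|D}]=1$ and the hypothesis $C_{G}(O_{p}(G))\leq O_{p}(G)$ to force $I_{\mathcal{A}|D}\leq O_{p}(G)$, and then applies Lemma~\ref{strongly-closed1} with $B=O_{p}(G)$ to conclude $I_{\mathcal{A}|D}=I_{\mathcal{A}|D}\cap O_{p}(G)\unlhd G$ --- this is precisely the ``alternative route'' you sketch in your final sentence. Your main route instead feeds the restricted family $\mathcal{A}|D$ directly into Allcock's axiomatic theorem, and both verifications are sound: the identity $(\mathcal{A}|D)|Q=\mathcal{A}|(D\cap Q)$ together with strong closure forcing $N_{G}(Q)\leq N_{G}(D\cap Q)$ handles condition (i), and for condition (ii) the replacement $A^{*}\leq AA^{b}$ stays inside $D$ because strong closure gives $D\unlhd P$ and $b\in P$, while Theorem~\ref{replacement}(ii) guarantees the iteration terminates in a member of $\mathcal{A}|D$ containing $[B,B]$ but not $B$ that $B$ normalises. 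What your approach buys is economy: for this statement it makes the six-step minimal-counterexample proof of Lemma~\ref{strongly-closed1} unnecessary, at the cost of re-running the replacement iteration that the paper has already packaged inside that lemma. What the paper's approach buys is the stronger intermediate fact $I_{\mathcal{A}|D}\cap B\unlhd G$ for an arbitrary normal $p$-subgroup $B$, proved without assuming $C_{G}(O_{p}(G))\leq O_{p}(G)$. One small wording caveat: hypothesis (c) of Theorem~\ref{ZJ-axiomatic} is implied by, rather than identical to, $p$-stability of $G$ (the latter quantifies over all $p$-subgroups, not only normal ones); the implication you need is exactly the one the paper itself uses in Step 5 of the proof of Lemma~\ref{p-complement}.
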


\begin{proof}
Let $I_{D}=I_{\mathcal{A}|D}$. Since $D$ is a strongly closed subgroup in $P$, it follows that  $D\unlhd P$. So  $I_{D}$ is $N_{G}(D)$-invariant by condition (i). In particular, $I_{D}$ is $P$-invariant. Thus, $I_{D}\unlhd P$. Since $O_{p}(G)\leq P$ normalises $I_{D}$ and $I_{D}$ is abelian, we see that $[O_{p}(G),I_{D},I_{D}]=1$. Let $C=C_{G}(O_{p}(G))$. Since $G$ is $p$-stable, it follows that $I_{D}C/C\leq O_{p}(G/C)$. 
Note that we have $C=C_{G}(O_{p}(G))\leq O_{p}(G)$ by hypothesis, so $O_{p}(G/C)=O_{p}(G)/C$.
Thus $I_{D}C/C\leq O_{p}(G)/C$ and  $I_{D}\leq O_{p}(G)$.
By Lemma \ref{strongly-closed1}, we have $I_{D}=I_{D}\cap O_{p}(G)\unlhd G$.
\end{proof}

Similar to the proof of Lemma \ref{p-complement}, we  obtain the following result.

\begin{lemma}\label{lem4}
Let $p$ be an odd prime and let $P$ be a Sylow $p$-subgroup of a finite group $G$. Suppose that $\mathcal{A}\subseteq \mathfrak{Ab}(P)$  satisfies the following properties:
\begin{itemize}
\item[(i)]  For every $Q\unlhd P$, $I_{\mathcal{A}|Q}$ is $N_{G}(Q)$-invariant.
\item[(ii)] For every $B\unlhd P$ with nilpotent class at most two, if there exists $A\in \mathcal{A}$ that $[B,B]\leq A$ and $B$ does not normalise $A$, then $A^{*}=(A\cap A^{b})[A,b]\in \mathcal{A}$ for every $b\in N_{B}(N_{P}(A))-N_{B}(A)$.
\end{itemize}	
 Assume that $D$ is a strongly closed subgroup in $P$. If $N_{G}(I_{\mathcal{A}|D})$ is $p$-nilpotent, then $G$ is $p$-nilpotent.
\end{lemma}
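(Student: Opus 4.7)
The plan is to imitate the minimal counterexample argument of Lemma \ref{p-complement}, with the single essential change being to replace its appeal to Theorem \ref{ZJ-axiomatic} by the strongly-closed analogue Lemma \ref{strongly-closed2}. Assume $G$ is a counterexample of smallest order. First I would show that every subgroup $H$ with $P\le H<G$ is $p$-nilpotent: such an $H$ inherits the same $\mathcal{A}$, has the same Sylow $p$-subgroup $P$ in which $D$ remains strongly closed, and satisfies $N_H(I_{\mathcal{A}|D})\le N_G(I_{\mathcal{A}|D})$, so its $I_{\mathcal{A}|D}$-normaliser is $p$-nilpotent and minimality applies. Next, using Lemma \ref{lemma3.2.8}, I would pass to $\bar G=G/O_{p'}(G)$; since $D$ is a $p$-subgroup its image $\bar D$ is still strongly closed in $\bar P$ with respect to $\bar G$, the correspondence $Q\leftrightarrow \bar Q$ between $p$-subgroups preserves both abelianness and the operations $A\cap A^b$, $[A,b]$, so conditions (i), (ii) descend to $\bar{\mathcal{A}}$, and $N_{\bar G}(I_{\bar{\mathcal{A}}|\bar D})=\overline{N_G(I_{\mathcal{A}|D})}$ is $p$-nilpotent. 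Minimality then forces $O_{p'}(G)=1$.

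The proofs of $O_p(G)\ne 1$ and of Step 4 ($G/O_p(G)$ is $p$-nilpotent and $C_G(O_p(G))\le O_p(G)$) go through unchanged from Lemma \ref{p-complement}: one picks a nonidentity $p$-subgroup $U$ with $|N_G(U)|_p$ and then $|U|$ maximal among those whose normaliser is not $p$-nilpotent, uses Theorem \ref{Thompson-p} together with the fact that $Z(S)$ and $J(S)$ are characteristic in a Sylow $p$-subgroup $S$ of $N_G(U)$ to force $U\unlhd G$, identifies $U=O_p(G)$, and then invokes \cite[Theorem 6.3.2]{Gorenstein}. At this point Lemma \ref{strongly-closed2} becomes applicable whenever $G$ is $p$-stable, since its hypotheses $C_G(O_p(G))\le O_p(G)$ and (i), (ii) are exactly what we now have. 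Hence if $G$ were $p$-stable, then $I_{\mathcal{A}|D}\unlhd G$ and $G=N_G(I_{\mathcal{A}|D})$ would be $p$-nilpotent, contradicting our assumption on $G$. So $G$ is not $p$-stable, and by Lemma \ref{stable} its Sylow $2$-subgroup is non-abelian.

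The final contradiction is then exactly the coprime-action argument of Step 6 in Lemma \ref{p-complement}: the normal $p$-complement of $G/O_p(G)$ has a non-abelian $\bar P$-invariant Sylow $2$-subgroup $\bar T$ by Theorem \ref{coprime-action}, so $Z(\bar T)$ is $\bar P$-invariant, the preimage $V$ of $\bar P Z(\bar T)$ strictly contains $P$ and is proper in $G$, hence $p$-nilpotent by Step~1, and its normal $p$-complement $V_0$ is trapped inside $C_G(O_p(G))\le O_p(G)$, forcing $V_0=1$. The only point requiring new verification beyond the template of Lemma \ref{p-complement} is Step~2: I must check that strong closure of $D$ in $P$ descends to strong closure of $\bar D$ in $\bar P$, and that condition (ii)---which now involves the specific construction $A^*=(A\cap A^b)[A,b]$ rather than the mere existence clause used in Lemma \ref{p-complement}---is preserved under the quotient by the $p'$-group $O_{p'}(G)$. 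This is the main technical obstacle, but it is routine because the bijection between $p$-subgroups of $G$ containing $O_{p'}(G)$ and $p$-subgroups of $\bar G$ commutes with intersections, commutators and the taking of normalisers, so both the $A^*$ construction and the membership relation $A^*\in\mathcal{A}$ transfer faithfully to $\bar{\mathcal{A}}$.
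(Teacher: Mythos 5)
Your proposal is correct and is exactly what the paper intends: the paper gives no separate argument for Lemma \ref{lem4}, stating only that it follows ``similar to the proof of Lemma \ref{p-complement}'', and your adaptation---running the same minimal counterexample argument verbatim but substituting Lemma \ref{strongly-closed2} for Theorem \ref{ZJ-axiomatic} at the $p$-stability step---is precisely that intended modification. Your identification of the quotient-by-$O_{p'}(G)$ step (descent of strong closure of $D$ and of condition (ii)) as the only point needing extra verification is accurate, and your sketch of why it holds is sound.
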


\section{Proof of Theorem \ref{main} and Theorem \ref{main2}} \label{section4}
\begin{proof}[Proof of Theorem \ref{main}]
The necessity is clear.
So we just need to establish the sufficiency of the condition. Let $\mathcal{F}$ be a minimal counterexample such that the number of morphisms $|\mathcal{F}|$ of $\mathcal{F}$ is minimal and $N_{\mathcal{F}}(I_{\mathcal{A}})=\mathcal{F}_{P}(P)$ but $\mathcal{F}\neq \mathcal{F}_{P}(P)$. 
	
We claim that every proper fusion subsystem of $\mathcal{F}$ over $P$ is equal to $\mathcal{F}_{P}(P)$.
Let $\mathcal{E}$ be a proper fusion subsystem of $\mathcal{F}$ over $P$. 
Note that $I_{\mathcal{A}}$ is $\operatorname{Aut}_{\mathcal{F}}(P)$ invariant by property (i), so $I_{\mathcal{A}}\unlhd P$.
Then $\mathcal{F}_{P}(P)\subseteq N_{\mathcal{E}}(I_{\mathcal{A}}) \subseteq N_{\mathcal{F}}(I_{\mathcal{A}})=\mathcal{F}_{P}(P)$. Thus, $N_{\mathcal{E}}(I_{\mathcal{A}})=\mathcal{F}_{P}(P)$. Since $\operatorname{Aut}_{\mathcal{E}}(Q)\leq \operatorname{Aut}_{\mathcal{F}}(Q)$, it follows that $I_{\mathcal{A}|Q}$ is $\operatorname{Aut}_{\mathcal{E}}(Q)$ invariant if $Q\unlhd P$ by property (i).
By the minimality of $\mathcal{F}$, we have $\mathcal{E}=\mathcal{F}_{P}(P)$ as claimed.

Therefore, $\mathcal{F}$ is sparse. By Theorem \ref{sparse}, $\mathcal{F}$ is constrained as $p$ is odd.	
Hence there exists a finite group $G$ such that $P\in \operatorname{Syl}_{p}(G)$, $\mathcal{F}=\mathcal{F}_{P}(G)$ and $C_{G}(O_{p}(G))\leq O_{p}(G)$. Since $N_{\mathcal{F}}(I_{\mathcal{A}})=\mathcal{F}_{P}(N_{G}(I_{\mathcal{A}}))=\mathcal{F}_{P}(P)$, we have $N_{G}(I_{\mathcal{A}})$ is $p$-nilpotent. By Lemma \ref{p-complement}, $G$ is $p$-nilpotent. Hence $\mathcal{F}=\mathcal{F}_{P}(G)=\mathcal{F}_{P}(P)$, a contradiction. This shows that $\mathcal{F}=\mathcal{F}_{P}(P)$, and the proof is complete.
\end{proof}

\begin{proof}[Proof of Theorem \ref{main2}]
Suppose that $D$ is a strongly closed subgroup in $\mathcal{F}$.
Assume that $N_{\mathcal{F}}(I_{\mathcal{A}|D})=\mathcal{F}_{P}(P)$, and $\mathcal{A}\subseteq \mathfrak{Ab}(P)$ satisfies the properties (i) and (ii) in Theorem \ref{main2}.
Let  $\mathcal{F}$ be a minimal counterexample such that $\mathcal{F}\neq \mathcal{F}_{P}(P)$. 

We claim that $\mathcal{F}$ is sparse. In fact, let $\mathcal{E}$ be a proper fusion subsystem of $\mathcal{F}$ over $P$. We will prove that $\mathcal{E}=\mathcal{F}_{P}(P)$.
Recall that $D$ is strongly closed in $\mathcal{F}$, and so $D\unlhd P$. By property (i), $I_{\mathcal{A}|D}$ is $\operatorname{Aut}_{\mathcal{F}}(D)$-invariant. In particular, $I_{\mathcal{A}|D}$ is $P$-invariant and $I_{\mathcal{A}|D}\unlhd P$. Thus $N_{\mathcal{F}}(I_{\mathcal{A}|D})$ is a saturated fusion system over $P$.
Note that $\mathcal{F}_{P}(P)\subseteq N_{\mathcal{E}}(I_{\mathcal{A}|D}) \subseteq N_{\mathcal{F}}(I_{\mathcal{A}|D})=\mathcal{F}_{P}(P)$. Hence $N_{\mathcal{E}}(I_{\mathcal{A}|D})=\mathcal{F}_{P}(P)$.
By property (i), for each $Q\unlhd P$, $I_{\mathcal{A}|Q}$  is $\operatorname{Aut}_{\mathcal{F}}(Q)$ invariant. Since $\operatorname{Aut}_{\mathcal{E}}(Q)\leq \operatorname{Aut}_{\mathcal{F}}(Q)$, it follows that $I_{\mathcal{A}|Q}$ is $\operatorname{Aut}_{\mathcal{E}}(Q)$ invariant.
By the minimality of $\mathcal{F}$, $\mathcal{E}=\mathcal{F}_{P}(P)$. So $\mathcal{F}$ is sparse as claimed.

Since $p$ is an odd prime, it follows that the sparse fusion system $\mathcal{F}$ is constrained by Theorem \ref{sparse}. By the model theorem there exists a finite group $G$ such that $P\in \operatorname{Syl}_{p}(G)$, $\mathcal{F}=\mathcal{F}_{P}(G)$ and $C_{G}(O_{p}(G))\leq O_{p}(G)$. Since $N_{\mathcal{F}}(I_{\mathcal{A}|D})=\mathcal{F}_{P}(N_{G}(I_{\mathcal{A}|D}))=\mathcal{F}_{P}(P)$, we have $N_{G}(I_{\mathcal{A}|D})$ is $p$-nilpotent. By Lemma \ref{lem4}, $G$ is $p$-nilpotent. Hence $\mathcal{F}=\mathcal{F}_{P}(G)=\mathcal{F}_{P}(P)$, a contradiction. This contradiction shows that $\mathcal{F}=\mathcal{F}_{P}(P)$, and the proof is complete.
\end{proof}

\end{document}